\theoremstyle{acmdefinition}
\newtheorem{remark}[theorem]{Remark}}
\newcommand{\teletype}[1]{\ensuremath{\mathtt{#1}}}
\newcommand{\systemname}[1]{\teletype{\color{darkgray}#1}\xspace}
\newcommand{\Agda}{\systemname{Agda}}
\newcommand{\agdaCubical}{\systemname{agda/cubical}}
\newcommand{\Coq}{\systemname{Coq}}
\newcommand{\CoqEAL}{\systemname{CoqEAL}}
\newcommand{\CubicalAgda}{\systemname{Cubical} \systemname{Agda}}
\newcommand{\Lean}{\systemname{Lean}}
\definecolor{Revolutionary}{RGB}{232,70,68}
\newcommand{\anum}[1]{\AgdaNumber{#1}}
\newcommand{\func}[1]{\AgdaFunction{#1}}
\newcommand{\var}[1]{{\AgdaBound{#1}}}
\newcommand{\con}[1]{{\AgdaInductiveConstructor{\ensuremath{\mathsf{#1}}}}}
\newcommand{\N}{\mathbb{N}}
\newcommand{\Z}{\mathbb{Z}}
\newcommand{\R}{\mathbb{R}}
\newcommand{\Pco}[3]{#1[#2]/( #3 )}
\newcommand{\base}[2]{\con{base}\,#1\,#2}
\newcommand{\Vect}[2]{\func{Vec}\;#1\;#2}
\definecolor{dkblue}{rgb}{0,0.1,0.5}
\definecolor{lightblue}{rgb}{0,0.5,0.5}
\definecolor{dkgreen}{rgb}{0,0.6,0}
\definecolor{dkbrown}{rgb}{0.4,0,0}
\definecolor{dkviolet}{rgb}{0.6,0,0.8}
\newcommand{\mycomment}[3]{}
\newcommand{\trunc}[1]{\left\lVert#1\right\rVert}
\newcommand{\AgdaTrunc}[1]{\func{$\left\lVert\,#1\,\right\rVert$}}
\newcommand{\truncElem}[1]{\con{\mid}\,#1\,\con{\mid}}
\newcommand\ZMod{\ensuremath{\mathbb{Z}_2}}
\newcommand{\Klein}[0]{\ensuremath{\func{$K^2$}}}
\newcommand{\RP}[0]{\ensuremath{\func{$\mathbb{R}P^2$}}}
\newcommand{\RPW}[0]{\ensuremath{\func{$\mathbb{R}P^2 \,\vee \,\mathbb{S}^1$}}}
\newcommand{\SW}{\func{$\mathbb{S}^{\func{2}} \,\vee \,\mathbb{S}^{\textnormal{\func{4}}}$}}
\begin{document}

\title{Computing Cohomology Rings in Cubical Agda}

\author{Thomas Lamiaux}
\email{thomas.lamiaux@ens-paris-saclay.fr}
\orcid{0000-0002-7318-5814}
\affiliation{%
  \institution{University Paris-Saclay and\\ ENS Paris-Saclay}
  \city{Gif-sur-Yvette}
  \country{France}
}

\author{Axel Ljungström}
\email{axel.ljungstrom@math.su.se}
\orcid{0000-0001-6946-0775}
\affiliation{%
  \institution{Department of Mathematics, Stockholm University}
  \city{Stockholm}
  \country{Sweden}
}

\author{Anders Mörtberg}
\email{anders.mortberg@math.su.se}
\orcid{0000-0001-9558-6080}
\affiliation{
  \institution{Department of Mathematics, Stockholm University}
  \city{Stockholm}
  \country{Sweden}
}

\begin{abstract}
  In Homotopy Type Theory, cohomology theories are studied
  synthetically using higher inductive types and univalence. This
  paper extends previous developments by providing the first fully
  mechanized definition of cohomology rings. These rings may be
  defined as direct sums of cohomology groups together with a
  multiplication induced by the cup product, and can in many cases be
  characterized as quotients of multivariate polynomial rings. To this
  end, we introduce appropriate definitions of direct sums and graded
  rings, which we then use to define both cohomology rings and
  multivariate polynomial rings. Using this, we compute the cohomology
  rings of some classical spaces, such as the spheres and the Klein
  bottle. The formalization is constructive so that it can be used to
  do concrete computations, and it relies on the \CubicalAgda system which
  natively supports higher inductive types and computational
  univalence.
\end{abstract}

\keywords{Synthetic Cohomology Theory, Cohomology Rings, Polynomials,
  Homotopy Type Theory}

\maketitle

\begin{acks}
  We are grateful for productive discussions with Evan Cavallo and Max
  Zeuner about the definition of graded rings as a HIT. We also thank
  Carl Åkerman Rydbeck for the formalization of the fact that
  \func{ListPoly} forms a commutative ring.

  This paper is based upon research supported by the Swedish Research
  Council (Vetenskapsrådet) under Grant No.~2019-04545. The research
  has also received funding from the Knut and Alice Wallenberg
  Foundation through the Foundation's program for mathematics.
\end{acks}

\section{Introduction}
\label{sec:introduction}

A fundamental idea in algebraic topology is that spaces can be
analyzed in terms of homotopy invariants---functorial assignments of
algebraic objects to spaces. Primary examples of such invariants
include homotopy groups, homology groups and, the concept of study in
this paper, cohomology groups and rings. Both homology and cohomology
groups are often much easier to compute than homotopy groups, making
them ubiquitous in modern pure mathematics as well as in
applied subjects, like topological data analysis \cite{TDA21}.

Intuitively, the cohomology groups $H^n(X,G)$ of a space $X$ relative
to an abelian group $G$, in particular $\Z$, characterize the connected
components of $X$ as well as its $(n+1)$-dimensional holes.
\cref{fig:circles} depicts the circle, $\mathbb{S}^1$, and two circles
that have been glued together in a point, i.e. the \emph{wedge sum} of two
circles, $\mathbb{S}^1 \vee \mathbb{S}^1$. The fact that these spaces have a
different number of holes is captured cohomologically by
$H^1(\mathbb{S}^1,\Z) \cong \Z$ and
$H^1(\mathbb{S}^1 \vee \mathbb{S}^1,\Z) \cong \Z \times \Z$, which
geometrically means that they have one, respectively two,
$2$-dimensional holes (i.e. empty interiors). As cohomology groups are
homotopy invariants, this then means that the spaces cannot be
continuously deformed into each other.

\begin{figure}[h]
  \centering
  \begin{mathpar}
  \begin{tikzpicture}[scale=0.5]
    \draw[color=blue!90](0.5,0) circle (1.5);
  \end{tikzpicture}
  \and
  \begin{tikzpicture}[scale=0.5]
    \draw[color=blue!90](5,0) circle (1.5);
    \draw[color=blue!90](8,0) circle (1.5);
    \filldraw[color=blue!90, very thick](6.5,0) circle (0.05);
  \end{tikzpicture}
  \end{mathpar}
  \caption{$\mathbb{S}^1$ and $\mathbb{S}^1 \vee \mathbb{S}^1$.}
  \label{fig:circles}
\end{figure}

More formally, one considers cohomology with coefficients in an
abelian group $G$ as a family of contravariant functors
$H^n(\_\hspace{.5mm},G) : \mathsf{Space} \to \mathsf{AbGroup}$
assigning to a space\footnote{We are intentionally vague about what a
  ``space'' is, but it can for example be a topological space or some
  more combinatorial presentation like simplicial or CW complexes, or
  as in HoTT/UF, a type.} $X$ its $n$th cohomology group with
$G$-coefficients. This assignment is assumed to satisfy the
Eilenberg-Steenrod axioms~\cite{eilenberg1952foundations} and be
stable under weak homotopy equivalence. This is a precise mathematical
way to say that the functors $H^n(\_\hspace{.5mm},G)$ identify spaces
of the same ``shape''. Such a family of functors is then said to
constitute a \emph{cohomology theory}.

Classically, homology groups have a definition very similar to that of
cohomology groups and can sometimes be easier to compute, but
cohomology groups (when taken with coefficients in a ring $R$) have an
important advantage: they can be equipped with a graded product
\begin{align*}
  \smile \, : H^n(X,R) \to H^m(X,R) \to H^{n+m}(X,R)
\end{align*}
This product, known as the \textit{cup product}, can then be used to construct a graded
ring $H^*(X,R)$, known as the \emph{cohomology ring} of $X$ with
coefficients in $R$. This gives an even more fine-grained invariant
than just the (co)homology groups, as it can help to distinguish spaces
for which all (co)homology groups agree. As an example, consider
\cref{fig:torusandmickeymouse} which depicts a torus and the ``Mickey
Mouse space'' consisting of a sphere glued together with two circles.

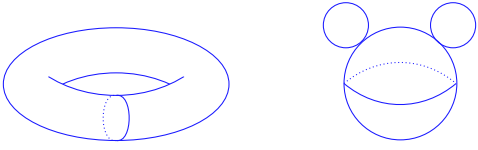
\begin{figure}[h!]
  \centering
  \begin{mathpar}
    \begin{tikzpicture}[scale=0.5]
      \useasboundingbox (-3,-1.5) rectangle (3,1.5);
      \draw[color=blue!90] (0,0) ellipse (3 and 1.5);
      \begin{scope}
        \clip (0,-1.8) ellipse (3 and 2.5);
        \draw[color=blue!90] (0,2.2) ellipse (3 and 2.5);
      \end{scope}
      \begin{scope}
        \clip (0,2.2) ellipse (3 and 2.5);
        \draw[color=blue!90] (0,-2.2) ellipse (3 and 2.5);
      \end{scope}
      \draw[color=blue!90] (0,-1.5) to[bend right=100] (0,-0.3);
      \draw[color=blue!90,densely dotted] (0,-1.5) to[bend left=100] (0,-0.3);
    \end{tikzpicture}
    \and
    \begin{tikzpicture}[scale=0.5]
      \draw[color=blue!90](-0.9,0) circle (1.5);

      \draw[color=blue!90](0.5,1.55) circle (0.6);
      \draw[color=blue!90](-2.35,1.55) circle (0.6);
      \draw[color=blue!90] (-2.4,0) to[bend right=40] (0.58,0);
      \draw[color=blue!90,densely dotted] (-2.4,0) to[bend left=40] (0.58,0);
    \end{tikzpicture}
  \end{mathpar}
  \caption{$\mathbb{T}^2$ and $\mathbb{S}^2 \vee \mathbb{S}^1 \vee \mathbb{S}^1$.}
  \label{fig:torusandmickeymouse}
\end{figure}

\noindent Let $X$ be either of the two spaces in the figure; one can
prove that $H^0(X,\Z) \cong \Z$ as they are connected,
$H^1(X,\Z) \cong \Z \times \Z$ as they each have two 2-dimensional
holes (the circle in the middle of the torus and the one going around
the interior, and the ``ears'' of Mickey), and $H^2(X,\Z) \cong \Z$ as
they have one 3-dimensional hole each (the interior of the torus and
Mickey's head). Furthermore, their higher cohomology groups are all
trivial as they do not have any further higher dimensional
cells. The cohomology groups are hence insufficient to tell the
spaces apart. However, one can show that the cup product on
$\mathbb{S}^2 \vee \mathbb{S}^1 \vee \mathbb{S}^1$ is trivial, while
for $\mathbb{T}^2$ it is not. So
$H^*(\mathbb{S}^2 \vee \mathbb{S}^1 \vee \mathbb{S}^1,\Z) \not \cong
H^*(\mathbb{T}^2,\Z)$, which again means that the spaces cannot be
continuously deformed into each other.

Providing a suitable framework for formalizing the graded ring
structure of the cohomology rings of a space is one of the main goals
of this paper. The formalization is carried out in
\CubicalAgda~\cite{cubicalagda2}, an extension of \Agda~\cite{Agda},
with native support for higher inductive types (HITs) and
computational univalence. This is a direct implementation of Homotopy
Type Theory and Univalent Foundations (HoTT/UF), where the idea that
equalities between terms of a type can be represented as paths in a
space is taken very literally. This makes it possible to develop
homotopy theory \emph{synthetically} as in the HoTT Book \cite{HoTT13}
and many classical results from homotopy theory have been formalized
this way.
Synthetic cohomology theory in Book HoTT was initially studied at the
IAS special year on HoTT/UF in 2012--2013~\cite{ShulmanBlog13} and has
since been used to develop the Eilenberg-Steenrod
axioms~\cite{CavalloMsc15}, cellular
cohomology~\cite{BuchholtzFavonia18}, Atiyah-Hirzebruch and Serre spectral sequences \cite{FlorisPhd}, and to prove that
$\pi_4(\mathbb{S}^3) \simeq \Z/2\Z$ \cite{Brunerie16}. By
developing these results cubically in \CubicalAgda, many
proofs can be substantially simplified, as illustrated for synthetic
homotopy theory by \citet{cubicalsynthetic} and for integral
cohomology theory by
Brunerie, Ljungström, and Mörtberg \cite{BLM}. This is made possible
by the fact that univalence computes, and more importantly, that all
computation rules for HITs hold strictly and not just up to paths as
in Book HoTT.

This present paper is a continuation of the work by \citet{BLM}, where
the integral cohomology groups $H^n(X,\Z)$ were developed and studied
in \CubicalAgda. There, the authors computed multiple $\Z$-cohomology
groups of various spaces represented as HITs, including the spheres,
torus, real/complex projective planes, and Klein bottle. They also
gave new synthetic constructions of the group structure on $H^n(X,\Z)$
and of the cup product, which led to substantially simplified proofs
compared to those of \citet{Brunerie16}. As all the proofs were
constructive, \CubicalAgda could be used to compute with these
cohomology operations. The present paper is also carefully written so
that all proofs are constructive and can be used to do concrete
computations.  Having the possibility of doing proofs simply by
computation is one of the most appealing aspects of developing
synthetic cohomology theory cubically. As this is not possible with
pen and paper proofs, or even with many formalized proofs in Book
HoTT, one often has to resort to doing long calculations by hand. If
proofs instead can be carried out using a computer, many of these long
calculations become obsolete.

\paragraph{Contributions}
The main contribution of the paper is the first fully formalized
synthetic definition of the cohomology ring $H^*(X,R)$ given a space
$X$ and ring $R$ in \cref{sec:cohomologyrings}. While previous authors
have discussed synthetic constructions of the cup
product~\citep{Brunerie16,Baumann18,BLM}, we lift it to a well-chosen
construction of $H^*(X,R)$. Furthermore, we compute some of these
rings for various $X$ and $R$, including the spheres, real and complex
planes, Klein bottle, and various wedges of these spaces. These
cohomology rings are all equivalent to some $R[X_1,\dots,X_n]/I$,
i.e. a (multivariate) polynomial ring modulo an ideal of relations.
These particular examples of spaces are chosen because they illustrate
different aspects of the computations as well as the need to sometimes
change the ring $R$ in order to distinguish spaces.

In order to construct $H^*(X,R)$, we give a general
formalization of graded rings (\cref{sec:formalizinggradedrings}),
which in turn requires us to formalize direct sums of $\N$-indexed
families of groups (\cref{subsec:directsums}). This is interesting on
its own, as special care has to be taken to remain constructive while
not imposing undesirable decidability assumptions. To facilitate
convenient formal proofs involving the direct sum, we give a new
definition of it using a HIT. As a byproduct, we get a new definition
of (multivariate) polynomial rings (\cref{sec:polynomialrings}) which
is well-suited both for programming and proving. As part of this, we
rely on the structure identity principle (SIP)---univalence for
\emph{structured} types---to transport proofs between different
representations of the direct sum
(\cref{subsubsec:sip}).

All results in the paper have been formalized in \CubicalAgda and is
part of the \agdaCubical library (available at
\url{https://github.com/agda/cubical/}). The code in the paper is
mainly literal \Agda code taken verbatim from the library, but we have
taken some liberties when typesetting, e.g. shortening notations and
omitting some universe levels. The connections between the paper and
formalization is summarized in a summary file which can be found on
\href{https://github.com/agda/cubical/blob/master/Cubical/Papers/CohomologyRings.agda}{GitHub}.
The summary file typechecks with \Agda's \texttt{--safe}
flag, which ensures that there are no admitted goals or postulates.

\section{Background}
\label{sec:background}

Here we briefly overview prior work and survey the fundamental
concepts underpinning the results of the paper.

\subsection{Homotopy Type Theory in \CubicalAgda}

\Agda is a dependently typed functional programming language, in which
each program may be interpreted as a proof in intensional type theory
\cite{MartinLof84bibliopolis}. For a brief overview of the syntax, see
\cref{table:agda}.
\begin{table}[h!]
  \centering
  \begin{tabular}{ |c | c| }
    \hline
    Concept & Syntax \\ [0.5ex]
    \hline \hline
    Function types & $A \to B$ \\
    Dependent function types & $(x : A) \to B$ \\
    Implicit dep. function types & $\{x : A\} \to B$ \\
    Function application, $f(x)$ & $f\,x$ \\
    Dependent pair types & $\func{Σ}\,A\,B$ and $\func{Σ[}\,x\,\func{∈}\,A\,\func{]}\,B\,x$ \\
    Universes (at level $\ell$) & $\func{Type}\,\var{ℓ}$ \\ [0.5ex]
    \hline
  \end{tabular}
  \caption{Overview of \Agda syntax}
  \label{table:agda}
\end{table}
With the addition of the univalence axiom of \citet{Voevodsky10cmu},
which says that equivalence of types can be promoted to
equalities/paths of types, \Agda may be used as a proof assistant for
various flavors of HoTT/UF. \CubicalAgda is an extension of \Agda for
a particular flavor of HoTT/UF -- the cubical type theory of
\citet{CCHM18} and \citet{CoquandHuberMortberg18}. In \CubicalAgda, unlike plain
\Agda, univalence is given computational content, allowing us to carry
out computations involving it. \CubicalAgda also has native support
for HITs. These types can, in particular, be used to define quotient
types and various spaces.

The major difference when working in \CubicalAgda compared to vanilla
\Agda or Book HoTT is that the primary identity type over a type $A$
is changed from Martin-Löf's inductive construction
\cite{MartinLof75itt} to a primitive \emph{path}-type. The
identification $x \,\func{$\equiv$}\,y$ is captured by
$\func{Path}\,A\,x\,y$, the type of functions $p : \func{I} \to A$,
where $\func{I}$ is a primitive interval type, restricting
definitionally to $x$ and $y$ at the endpoints \con{i0} and \con{i1}
of \func{I}. The interval also comes with join, meet and inversion
operations \func{\_∧\_}, \func{\_∨\_}, \func{∼{}\_}, endowing it with
the structure of a De Morgan algebra.

For an example of the cubical path type in action, consider the
following proof of function extensionality:
\ExecuteMetaData[agda/latex/Section2.tex]{funExt} Above, the term
$p\,x$ is interpreted as a path $\func{I} \to B$, restricting
definitionally to $f\,x$ at $\con{i0}$ and to $g\,x$ at
$\con{i1}$. Exploiting the $\eta$-rule for function types, the above
restricts definitionally to $f$ at $\con{i0}$ and to $g$ at
$\con{i1}$.

\CubicalAgda also has a dependent path type, \func{PathP}. Given a
line of types \var{A} : \func{I} $\to$ \func{Type} (which we
informally may think of as \var{A} \con{i0} \func{$\equiv$} \var{A}
\con{i1}) and points \var{x} : \var{A} \con{i0}, \var{y} : \var{A}
\con{i1}, the type \func{PathP} \var{A} \var{x} \var{y} expresses that
\var{x} and \var{y} may be identified relative to \var{A}. The regular
(homogeneous) path type $\func{\_$\equiv$\_}$ is, by definition, \func{PathP} ($\lambda$
\var{i} $\to$ \var{A}), i.e. the special case when the line of types
is constant. For an example of how \func{PathP} is used in
\CubicalAgda, consider the characterization of equality in
$\Sigma$-types: given (\var{x},\var{bx}), (\var{y},\var{by}) :
\func{Σ} \var{A} \var{B}, a path \var{p} : \var{x} \func{$\equiv$}
\var{y} and a dependent path \func{PathP} ($\lambda$ \var{i} $\to$
\var{B} (\var{p} \var{i})) \var{bx} \var{by}, we get a path of pairs
(\var{x},\var{bx}) \func{$\equiv$} (\var{y},\var{by}). In
\CubicalAgda:
\ExecuteMetaData[agda/latex/Section2.tex]{SigmaEq}

As previously mentioned, \CubicalAgda also supports HITs. In
particular, this allows us to construct various (homotopy types of)
topological spaces. For a simple example, consider the HIT describing
the circle, defined by a point \con{base} and the identification
\con{loop} : \con{base} \func{$\equiv$} \con{base}.
\ExecuteMetaData[agda/latex/Section2.tex]{S1}

HITs automatically come equipped with elimination principles. For
instance, in order to define a dependent function
$(x : \func{$\textnormal{S}^1$}) \to B\,x$, we need to provide a point
\var{b} : \var{B} \con{base} and, given $i : \func{I}$, a point $p_i$
reducing definitionally to $b$ when $i = \con{i0}$ and $i = \con{i1}$,
i.e. a dependent path
$\func{PathP}\,(\lambda \,i \to B\,(\con{loop}\,i)) \,b\,b$. In
\CubicalAgda, definitions involving HITs can also be written by
pattern-matching.  For a simple example, consider the definition of
the inversion map on $\func{{$\textnormal{S}^1$}}$.
\ExecuteMetaData[agda/latex/Section2.tex]{invS1}

Apart from topological spaces, HITs can also be used to capture
important type operations, such as truncations. For instance, the
propositional truncation is defined by:
\ExecuteMetaData[agda/latex/Section2.tex]{propTrunc}
This HIT takes a type $A$ and forces it to be a \emph{proposition}, or
$(-1)$-type: a type where any two points are identified up to a
path. This is an important construction for capturing logical
propositions in HoTT/UF. Indeed, we wish to think of logical
propositions as having at most one witness and not any higher
mathematical structure. For instance, we define existential
quantification as:
\begin{align*}
  \func{$\exists$[ }a \,\func{$\in$}\, A\,\func{]} (P\, a) = \AgdaTrunc{\Sigma\,{\color{black}{A \,P}}}_{\func{-1}}
\end{align*}
In this paper, we follow the HoTT Book terminology and say that $a$
\emph{merely} exists when it is existentially quantified. Also, note
that the propositional truncation in the definition is crucial. In
HoTT/UF $\func{Σ}\,A\,P$, is without the truncation interpreted as
the total space of $P$, which may be highly non-trivial.

We can also move one step up and define set
truncation:
\ExecuteMetaData[agda/latex/Section2.tex]{setTrunc}
The set truncation turns any type into a \emph{set}, or $0$-type: a
type where all path type are propositions. This notion allows us to
capture the idea of a classical set: a set in HoTT/UF is simply a
collection of points without any higher homotopical structure. We can
iterate this procedure and define $\func{$\trunc{\_}$}_n$ for any $n >
0$. The $n$-truncation $\func{$\trunc{{\color{black}{A}}}$}_n$ turns
$A$ into an $n$-type: a type over which all path types are
$(n-1)$-types. For the general definition of $\func{$\trunc{\_}$}_n$
one has to use a slightly different approach than we have for
$\func{$\trunc{\_}_{-1}$}$ and $\func{$\trunc{\_}_0$}$ based on the
hub-and-spoke construction~\cite[Chapter 7.3]{HoTT13}.

Later on, we will see that HITs also play an important role in the
construction of types quotiented by a relation (as opposed to
topological spaces). The idea is that we may add paths between points
to capture the relation we want to quotient by and then set truncate
to kill off higher homotopical structure arising from the imposed
relations.

Finally, we will need pointed types. A pointed type is simply a pair
$(A,a_0)$ where $A$ is a type and $a_0 : A$ is a chosen basepoint. In
\CubicalAgda, we define the universe of pointed types by:

\begin{center}
\ExecuteMetaData[agda/latex/Section2.tex]{Pointed}
\end{center}
Given a pointed type $A$, we write $\func{typ}\,A$ for the underlying
type and $\func{pt}\,A$ for the basepoint. We will, however, often
abuse notation and simply write $A$ for the pointed type $(A,a_0)$.

\subsection{The structure identity principle}
\label{subsection:SIP}

It is often the case that the types we are interested in come equipped
with some structure on it. For instance, a group consists of an underlying set
\var{G} with a group structure
on \var{G}:
\[
\func{GroupStr}\,G = \sum_{(1,\,\cdot\,,\,\_^{-1})\,:\,\func{RawGroupStr}\,G}\func{GroupAx}\,(G,\,1\,,\,\cdot\,,\,\_^{-1})
\]
Above, $\func{RawGroupStr}(G)$ expresses that $G$ has a raw group
structure consisting of a neutral element and the usual group
operations, whereas $\func{GroupAx}$ expresses that it satisfies the
group axioms. Note that $\func{GroupAx}$ is a proposition as \var{G}
is a set. Many other algebraic structures can be captured in this way,
e.g. monoids, rings, fields, etc. A reasonable question to ask in a
univalent setting is whether an equivalence of types can be promoted
to an equality of structured types, such as groups. The
\emph{Structure Identity Principle} (SIP) \citep[Section~9.8]{HoTT13} is
an informal principle which attempts to answer this: given two
structured types $(A,S_A)$ and $(B,S_B)$ and an equivalence of
underlying types $A \simeq B$ which is a homomorphisms with respect to
the structure in question, we get a path of structured types
$(A,S_A)\, \func{$\equiv$}\,(B,S_B)$. For instance, an isomorphism of
groups $G$ and $H$ induces a path $G \, \func{$\equiv$}\, H$. This
form of invariance up to isomorphism is one of the big strengths of
univalent mathematics as it lets programs and proofs be transported
between isomorphic structured types.

Perhaps more importantly, the SIP may also be used to transfer proofs
between (partially defined) structures. Suppose, for instance, that we
are given a group $G$ and type $H$ with just a raw group structure
$(1_H,\cdot_H,\_^{-1_H})$. While it may be difficult to show directly
that this raw group structure respects the group axioms, it may be
easier to show that the equivalence $\varphi : G \simeq H$ of types
preserves the raw structure from $G$ to $H$.\footnote{For groups it of
  course suffices to preserve only the $\cdot$ operation for $\varphi$
  to be a group homomorphism, but this is a quite special property of
  groups and for general structures the whole raw structure needs to
  be preserved.} The SIP then gives an induced group structure on $H$
which is path-equal, as groups, to $G$. Thus, the SIP allows us to
transport proofs between structured types in an efficient way.
This has been implemented in the \CubicalAgda standard library for
most algebraic structures using the cubical SIP of
Angiuli,~Cavallo,~Mörtberg~and~Zeuner~\cite{ACMZ21}.

\subsection{Cohomology theory in \CubicalAgda}

For the construction of cohomology in HoTT/UF, we use Brown
representability~\cite{BrownRepr} and define cohomology groups as
homotopy classes of maps into Eilenberg-MacLane spaces. Classically
this is provably equivalent to the singular cohomology of the spaces
we consider \cite{Hatcher2002}---but we take it as our definition.
Given an abelian group $G$, we
define the $n$th cohomology group of a type $X$ with $G$-coefficients
by:
\begin{align*}
  H^n(X,G) = \AgdaTrunc{{\color{black}{X}} \,{\color{black}{\to}} \,{\color{black}K(G,n)}}_{\func{0}}
\end{align*}
where $K(G,n)$ is the $n$th Eilenberg-MacLane space of $G$. Its
construction as a HIT is well-known in HoTT/UF and is due
to~\citet{LicataFinster14}. One can construct maps:
\begin{align*}
  \func{$+_k$} : K(G,n) &\to K(G,n) \to K(G,n) \\
  \func{$-_k$} : K(G,n) &\to K(G,n)
\end{align*}
and, for a ring $R$:
\begin{align*}
  \func{$\smile_k$} \, : K(R,n) \to K(R,m) \to K(R,n+m)
\end{align*}
These operations induce, by post-composition, operations on cohomology
groups, which we denote by $\func{$+_h$}$, $\func{$-_h$}$,
$\func{$\smile_h$}$, respectively.
Later on, we will see that these operations, by construction,
induce the ring structure on $H^*(X,G)$.

The cup product, with ring laws, was first introduced (in HoTT)
by~\citet{Brunerie16} for $\Z$-coefficients. These definitions and
results, in their entirety, have, however, never been formally
verified for Brunerie's original definition of the cup
product (although \emph{most} of them have by~\citet{Baumann18}).
Nevertheless, in~\citet{BLM}, the cup product was given a more concise, recursive
definition, which resulted in a complete formalization of the graded
ring axioms. The work in \citep{BLM} was only concerned with
$\Z$-coefficients however, but the authors emphasized that the results
are easily generalized to hold for arbitrary coefficients. The cup
product and ring axioms have since been formalized for arbitrary
coefficients and can be found in the \agdaCubical library. In
particular, it is shown that:
\begin{itemize}
\item The element $\func{$0_h$} : H^n(X,R)$ defined by
  \[
    \func{$0_h$} = \,\truncElem{\lambda \,x \to \func{$0_k$}}
    \]
    where $\func{$0_k$} : K(R,n)$ is the basepoint,
  right- and
    left-annihilates. That is, for $x : H^m(X,R)$, we have
    $$x \,\func{$\smile_h$}\, \func{$0_h$} \,\func{$\equiv$}\, \func{$0_h$} \,\func{$\smile_h$}\, x \,\func{$\equiv$}\, \func{$0_h$}$$
  \item The element $1_h : H^0(X,R)$ defined by
    \[
    \func{$1_h$} = \,\truncElem{\lambda x \to \func{$1_r$}}
    \]
    is a multiplicative unit. So, for $x : H^n(X,R)$, we have
    $$x \,\func{$\smile_h$}\, \func{$1_h$} \,\func{$\equiv$}\, \func{$1_h$} \,\func{$\smile_h$}\, x \,\func{$\equiv$}\, x$$
  \item The cup product is associative.
  \item The cup product left- and right-distributes over $\func{$+_h$}$.
  \item The cup product is graded-commutative. For $x : H^n(X,R)$ and $y : H^m(X,R)$, we have
    \[
    x \,\func{$\smile_h$}\, y \,\func{$\equiv$}\, (-\anum{1})^{nm}(y \,\func{$\smile_h$}\, x)
    \]
    In particular, when $R = \ZMod$ (i.e. $\Z/2\Z$), we have
    commutativity, since in this case inversion is given by the
    identity map.\footnote{ Graded commutativity for arbitrary
      coefficients was first formalized by~\citet{Baumann18} and is
      currently being developed in the \agdaCubical library. }
  \end{itemize}
  We note that the above identities, in most cases, hold up to a
  dependent path. For instance, right-annihilation holds up to a path
  in $\N$ identifying $n\,\func{$+$}\,\anum{0}$ with $n$.

\section{Formalizing graded rings}
\label{sec:formalizinggradedrings}

Both cohomology rings and polynomials rings are \emph{graded}
rings. This means that their underlying additive groups can be
decomposed as a direct sum $\bigoplus_{i : I} G_i$ of abelian groups
$G_i$ with a graded multiplicative operation
$\star : G_i \to G_j
\to G_{i + j}$. For $\star$ to be well-defined, we need the indexing
set $I$ to be a monoid $(I,e,+)$; for both polynomials and cohomology
rings, we pick
$(\N,\anum{0},\func{\_+\_})$. Indeed, the polynomial ring $R[X]$ is
trivially graded by $\N$ if we let $G_i$ constantly be $R$ and $\star$
the multiplication of $R$. The cohomology ring $H^*(X,R)$, on the
other hand, is defined as $\bigoplus_{i : \N} H^i(X,R)$ together with
the cup product.

In this section we study how these notions can be formalized
constructively in HoTT while avoiding decidability assumptions. The
reason not to restrict to types with decidable equality is to support
$\R[X]$ and $\mathbb{C}[X]$ as well as spaces for which the cohomology
groups do not have decidable equality. Furthermore, as we want to be
able to do concrete computations, we are careful to ensure that
everything is constructive. To achieve this, we crucially rely on
higher inductive types to define well-behaved quotient types.

\subsection{Direct sums}
\label{subsec:directsums}

To formalize graded rings we first need to formalize the direct sum of
a family of abelian groups over an arbitrary (not necessarily
monoidal) indexing set $I$. Given abelian groups $G_i$ indexed by a
set $I$, the direct sum is often defined as
\[
  \bigoplus_{i : I} G_i :=
  \{ (g_i)_{i \in I} \mid
    \exists \text{ finite } J \subset I, \forall n \notin J,~~g_n = 0 \in G_n \}
\]
This definition can be adapted to HoTT by taking dependent functions
$g : (i : I) \to G~i$ for which there \emph{merely} exists a finite
subset $J$ of $I$ such that for all $j \notin J$ we have $g~j ≡ 0$. To
make this precise we need a notion of finite subsets together with a
membership relation. Various approaches to finite sets in HoTT were
studied by Frumin,~Geuvers,~Gondelman and van der
Weide~\cite{FruminGeuvers+18}, but as is common in constructive
mathematics, the notions bifurcate into constructively distinct, but
classically equivalent, notions. For our purposes, the most suitable
notion seems to be Kuratowski finite sets where finite subsets can be
represented as duplicate-free lists of elements \citep[Theorem~2.8]{FruminGeuvers+18}. However, as remarked
above, for cohomology and polynomials rings we do not need general
direct sums, but only those indexed by $\N$. In this case, the
definition can be simplified as
\[
\bigoplus_{i : I} G_i := \{ (g_i)_{i \in \N} \mid \exists_{k \in \N} \forall_{n > k},~~g_n = 0 \in G_n \}
\]
This simplification is possible as $\N$ has a total order with
finite initial segments. Indeed, initial segments are finite subsets
and all finite subsets are included in an initial segment by computing
the maximum of the subset.
This definition is easy to formalize in \CubicalAgda:
\ExecuteMetaData[agda/latex/Section3.tex]{oplusfun}
Here $\func{⟨}\,G\,n\func{⟩}$ is the underlying type and \func{0⟨}
\var{G} \var{n} \func{⟩} is the zero of $G\,n$. To prove that this is
an abelian group we lift the operations pointwise, e.g. addition
can be defined as:
\ExecuteMetaData[agda/latex/Section3.tex]{oplusfunadd}
We omit the proof of \func{prf}, but it is easily proved using the
fact that its type is a proposition which means that the eliminator of
propositional truncation lets us extract the points at which \var{f}
and \var{g} become zero from \var{pf} and \var{pg}.

As the second component in \func{⊕Fun} is propositionally truncated, it
suffices to prove that the underlying functions are equal in order to show that
two elements of \func{⊕Fun} are equal:
\ExecuteMetaData[agda/latex/Section3.tex]{oplusfuneq}
As the operations are defined pointwise, the fact that \func{⊕Fun} is
an abelian group is then easily proved using function
extensionality. For example, commutativity is proved as follows:
\ExecuteMetaData[agda/latex/Section3.tex]{oplusfunaddcomm}

While \func{⊕Fun} is convenient for proving the group laws, it is not
very convenient for the multiplicative structure of the graded rings,
as we will see in the next section. To remedy this, we have also
formalized an equivalent definition using a HIT. This definition
works just as well for arbitrary indexing types $I$, so we consider
the following general form:
\ExecuteMetaData[agda/latex/Section3.tex]{oplushit}
In \func{⊕HIT}, the direct sum is generated by homogeneous elements
instead of being formed as a sum over elements in all degrees. The
\con{0⊕} constructor is the neutral element of \func{⊕HIT}, \con{base}
forms a homogeneous element of degree \var{n}, and \con{\_+⊕\_} lets
us take the sum of \func{⊕HIT} elements. The first three path
constructors ensure that the resulting type is a commutative monoid,
while the next two ensure that \con{0⊕} and \con{\_+⊕\_} reflect the
\func{0} and \func{\_+\_} of homogeneous elements. Finally, the last
constructor is necessary in order to ensure that the type is a set.
Without it, we could easily form non-trivial paths; for example,
\con{+⊕Comm} \var{x} \var{x}
would be a path from $x\,\con{+⊕}\,x$ to itself which would not necessarily be
equal to \func{refl}.

It is easy to give an abelian group structure to \func{⊕HIT} by
defining an inverse by pattern-matching:
\ExecuteMetaData[agda/latex/Section3.tex]{oplusneg}
We omit the last three cases of the definition as they are a little
bit more complicated. Essentially, the definition of \func{-⊕}
proceeds directly by structural recursion, negating the homogeneous
elements. It would be possible to add a constructor for \func{-⊕} to
the HIT, however this would have forced us to handle even more cases when
defining functions out of \func{⊕HIT}. Moreover, as \func{-⊕} is a
function we get many useful definitional equalities for concrete
inputs.

Having defined negation, we can easily prove that \func{⊕HIT} is an
abelian group for a general index type \var{I}. In order to avoid long
definitions with many cases, we prove a special eliminator which only
requires us to give the cases for point constructors when proving
propositions. As \func{⊕HIT} is a set, all of the equations between
\func{⊕HIT} elements are propositions, so this special eliminator
shortens the proofs substantially and we can avoid a lot of boilerplate
code. Defining special eliminators like this is a common pattern for
working with set truncated HITs in the \agdaCubical library.

\begin{remark}
  We could have proved that \func{⊕HIT} \func{ℕ} and \func{⊕Fun} are
  equivalent types and that this equivalence preserves the raw group
  structure (\func{0}, \func{\_+\_}, \func{-\_}). This way, the SIP
  would have allowed us to get the proofs of the abelian group laws
  for \func{⊕HIT} \func{ℕ} for free. However, as the proofs of these
  laws are easy to do in the general case, using the special
  eliminator for proving propositions, we formalized if for
  \func{⊕HIT} \var{I} directly.
\end{remark}

\subsection{Graded rings}
\label{subsec:gradedrings}

Having formalized direct sums, we now need to equip them with a ring
structure induced by a graded multiplicative operation
$\star : G_i \to G_j \to G_{i + j}$ over any monoid $(I,e,+)$. This means that we should define
a product operation of type
\[
  \bigoplus_{i : I} G_i \to \bigoplus_{i : I} G_i \to \bigoplus_{i : I} G_i
\]
using $\star$. For this product to satisfy the ring laws, the $\star$
operation has to satisfy suitable graded versions of them, e.g. it has
to have a multiplicative unit in degree $0$, be associative,
distribute over sums of elements of the same degree, etc. We refer the
interested reader to the formalization for the exact formulation of
these laws.

The $\star$ operation over the monoid $(\N,0,+)$ can be lifted
pointwise to a product operation on \func{⊕Fun} by the following
definition:
\[
(f * g)(n) = \sum_{i = 0}^n \uparrow_i^n (f(i) \star g(n - i))
\]
Here $\uparrow_i^n$ is a transport from $G_{i + (n -i)}$ to
$G_n$. This transport is necessary for \Agda to see that this is a
well-defined sum of elements in $G_n$. It is straightforward to show
that if $f$ and $g$ are zero after $k$ respectively $k'$ steps, then
the product is zero after at most $k \cdot k'$ steps. So $f * g$ is
indeed a well-defined element of \func{⊕Fun}.

Using the graded ring laws for $\star$ one can then prove the ring
laws for $*$. However, doing this gets surprisingly complicated
already for associativity as we need to fill in the gaps in the
following chain of equalities:
\begin{align*}
    &(f * (g * h)) (n) \\
    & = \sum_{i = 0}^n \uparrow_i^n (f(i) \star (g * h)(n - i)) \\
    & = \sum_{i = 0}^n \uparrow_i^n \left(f(i) \star \left(\sum_{j = 0}^{n -i} \uparrow_j^{n-i} (g(j) \star h(n-i-j))\right)\right) \\
    & = \cdots \\
    & = \sum_{i = 0}^n \uparrow_i^n \left(\left(\sum_{j = 0}^i \uparrow_j^i (f(j) \star g(i-j))\right) \star h(n-i)\right) \\
    & = \sum_{i = 0}^n \uparrow_i^n ((f * g) (i) \star h(n - i)) \\
    & = ((f * g) * h) (n)
\end{align*}
This might not look too bad on paper, especially if one ignores the
transports, but when one starts formalizing it in \CubicalAgda, one
quickly finds oneself descending deeper and deeper into transport
hell. This is exactly where \func{⊕HIT} shines; by instead working
mainly with homogeneous elements, we can lift $\star$ (for any monoid
$I$) and avoid all transports in the definition by writing the
following:\footnote{In practice, we give the same definition using
  pre-defined recursors for \func{$\oplus$HIT} which helps convince
  \Agda's termination checker. The definition by pattern-matching is
  merely included for pedagogical reasons.}
\ExecuteMetaData[agda/latex/Section3.tex]{oplusmul}
The most interesting case of the definition is the one where both
arguments are \con{base} elements. This corresponds to taking the
product of homogeneous elements, which of course gives another
homogeneous element, defined using \func{\_⋆\_}.

This gives a construction of the product operation, but we still have
to prove that it satisfies the ring laws. These laws are propositions
as they are equations between elements of a set, so we can again use
the special eliminator for proving proposition. It hence suffices to
prove the laws for the point constructors: \con{0⊕}, \con{base}, and
\con{\_+⊕\_}. Generally, the \con{0⊕} and \con{\_+⊕\_} cases are
trivial, which leaves the cases for \con{base}. This means that
we are left to prove the laws for homogeneous elements, which directly
follows from the graded ring laws for \func{\_⋆\_}. In particular,
associativity follows directly.

\begin{remark}
  The \Lean Mathlib \cite{mathlib} also has a formalization of direct
  sums and graded rings. There, $\bigoplus_{i \in I} G_i$ is defined as
  finitely supported dependent functions, i.e. dependent functions
  that are nonzero only at a finite set of points, just like in the
  first definition above. In fact, there is even a formal proof that a
  definition of $*$, similar to the sum involving transports, is
  associative. This proof heavily relies on tactics and automation to
  handle the complexity induced by the transports.  However,
  \CubicalAgda has very limited support for automation and because
  \CubicalAgda is proof relevant, this approach would most likely be a lot more
  challenging than in the proof irrelevant setting of \Lean.
\end{remark}

\subsubsection{Transporting the ring structure using the SIP}
\label{subsubsec:sip}

Having established that \func{⊕HIT} can be turned into a ring given
\func{\_⋆\_}, it is natural to ask how this relates to the ring
structure on \func{⊕Fun} using $*$. If we were to go through the
hurdles of transport hell, we would be able to equip \func{⊕Fun} with a
ring structure and, with some more work, we can then prove that this is
isomorphic to \func{⊕HIT} \func{ℕ}. The SIP then implies that these
rings are path-equal which means that all properties proved for one of
them also holds for the other. However, as discussed in
\cref{subsection:SIP}, the SIP actually gives us even more. If we,
instead of proving the ring laws for $*$ by hand, would prove that
\func{⊕HIT} \func{ℕ} and \func{⊕Fun} are equivalent as types and that
this equivalence preserves the \emph{raw} ring structure
(\func{0},\func{1},\func{-},\func{+},\func{*}), then we could use the
SIP to transport also the ring laws to get an induced ring structure
on \func{⊕Fun}.

Proving that \func{⊕HIT} \func{ℕ} and \func{⊕Fun} are equivalent types
can be done quite easily. In one direction, we send \con{0⊕} and
\con{+⊕} to the zero and addition of \func{⊕Fun}, and \con{base}
\var{n} \var{x} to the function that is \var{x} at position \var{n}
and zero otherwise. For the other direction, we can map a function to a
nested \con{+⊕} sum of \con{base} elements. It is direct to prove that
these maps cancel and hence establish that the types are
equivalent. We can now show that the raw ring structures is preserved
by the equivalence.  This is straightforward for all cases except
multiplication, which as usual boils down to homogeneous elements. The
proofs are a bit repetitive, but do not present any special technical
or conceptual difficulties. The SIP then gives an induced ring
structure on \func{⊕Fun} which is path-equal to the one on \func{⊕HIT}
\func{ℕ}, and we hence get that the rings satisfy the same properties.
This can be seen as a practical example, in the spirit of
\CoqEAL~\citep{CohenDenesMortberg13}, of using the SIP to avoid long
and technical proofs by changing data representation to a type where
some proofs are easier.

\section{Polynomial rings}
\label{sec:polynomialrings}

Recall that $R[X]$ can be defined by letting the underlying abelian
group be $\bigoplus_{\N}R$ and the multiplication be induced by the one
of $R$. Having formalized two constructions of $\bigoplus_\N$, we hence
get two equivalent representations of $R[X]$ which we will now
compare in detail. We will also compare them with a list based
representation, which clarifies the role of decidable equality in the
formalization, and discuss how $R[X_1,\dots,X_n]$ can be directly
encoded using \func{⊕HIT}.

\subsection{Polynomials as graded rings}

As discussed in the previous section, we can easily transport
properties back and forth between the \func{⊕Fun} and \func{⊕HIT}
representations of $R[X]$. In practice, however, the two
representations are quite different to work with.
As we have seen, the one based on \func{⊕Fun} is especially well-suited
for operations that can be defined pointwise, e.g.  addition, but
for operations that are not pointwise, things can get quite involved, because
they may depend on the $k$ after which the function becomes $0$.  For
example, in order to evaluate a polynomial at a point, one needs to take a sum
up to $k$; as we only merely have access to $k$, this gets somewhat
complicated. The \func{⊕HIT} definition has the benefit that it is
inductive, which means that operations can be defined by recursion and
properties proved by structural induction.  This means that some
definitions, like addition, get a bit longer than for \func{⊕Fun}, as
we need to write more cases. On the other hand, polynomial evaluation is much easier
to implement.

Also, from a computational perspective, the two representations
behave differently. The \func{⊕Fun} definition can be seen as a shallow
embedding, since polynomials are encoded as \Agda functions, which
makes them harder both to input and print. It is also complicated to
predict how efficient they are to compute with and how much memory
they use, as this depends on the internal representation of \Agda functions.

The HIT definition, on the other hand, is a deep embedding, which is easy to write and print. Furthermore, it is a sparse
representation where polynomials like $2X^3 + X^{100}$ are generated
by monomials and can be written compactly as
\[
  \con{base}\,\anum{3}\,\anum{2}\,\con{+⊕}\,\con{base}\,\anum{100}\,\anum{1}
\]
This resembles the polynomials on sparse Horner normal form of
\citet[Section~3]{GregoireMahboubi05}. In this representation, a
polynomial is either a constant $a \in R$ or on the form $a + pX^{n}$
for $a \in R$ and $p$ a polynomial in sparse Horner normal form. The
above polynomial would hence be encoded as $0 + (2 + X^{97})X^3$,
making it similar in terms of memory usage to \func{⊕HIT} \func{ℕ}. It
would be possible to implement this representation as a HIT where
equal representations of the same polynomial get identified, and then
prove that it is equivalent to \func{⊕HIT} \func{ℕ}.
However, a drawback of these sparse representations is that it
is easy to get complicated terms when working with concrete
polynomials. For example,
\[
  \con{base}\,\anum{3}\,\anum{2}\,\con{+⊕}\,\con{base}\,\anum{3}\,\anum{3}
\]
represents the same polynomial as \con{base} \anum{3} \anum{5} up to a
path. To remedy this, one should be careful to normalize polynomials
appropriately when doing efficient computations
\citep[Section~3.2]{GregoireMahboubi05}, which in turn might require
some decidability assumptions on \var{R} (e.g. to simplify \con{base} \var{n} \func{0r} to \con{0⊕}).

\subsection{List based polynomials and decidable equality}

Another common representation of polynomials is as a list of
coefficients; for instance, $[1,0,2,5]$ represents $1 + 2X^2 + 5X^3$. However,
one has to be careful to avoid zeroes at the end of the list, as for
example $[1,0,2,5,0,0]$ encodes the same polynomial as the list above.
This can be easily fixed by quotienting by a relation which equates
lists modulo trailing zeroes. The following HIT encodes this
very compactly:
\ExecuteMetaData[agda/latex/Section4.tex]{listpoly}
Proving \var{xs} \func{≡} \var{xs} \func{++} \func{replicate} \var{n}
\func{0r} for \var{xs} : \func{ListPoly} and \var{n} : \func{ℕ} is
easy, so this indeed equates a list with itself appended by
arbitrarily many zeroes. Furthermore, by a rather ingenious proof,
one can show that \func{ListPoly} is equivalent to \func{⊕Fun}, which
justifies omitting the set truncation in the definition; thus, this is
an alternative definition of $R[X]$. However, it is not well-suited
for sparse polynomials, as e.g. $2X^3 + X^{100}$ is encoded by a list with
$101$ elements, out of which $99$ are $0$.
Nevertheless, for dense polynomials it is not so bad and we avoid
having to normalize monomials for efficiency.

Something remarkable with all three of these representations is that we
avoid the assumption that $R$ is discrete (i.e. that its underlying
type has decidable equality), while still obtaining constructive
proofs that $R[X]$ is a commutative ring. This crucially relies on
HITs for constructing quotient types. However, as is common in
constructive algebra \citep{Mines}, we cannot compute neither the
degree nor the leading coefficient of these HIT polynomials. This is
especially clear for \func{ListPoly}, where we cannot know how many
trailing zeroes a list has without being able to test if an element is
zero. However, this is not as big a problem as one might
expect. Indeed, as both the \func{ListPoly} and \func{⊕HIT} are HITs, we
can construct functions on them by structural recursion and reason
about them by induction without having to talk about degrees.

For some applications it might be necessary to compute the degree or
leading coefficient though (e.g. for polynomial pseudo division
\citep{Knuth}). To this end, it is useful to restrict attention to
discrete rings $R$ and define $R[X]$ as lists with a nonzero last
coefficient, which makes the degree and leading coefficient
computable. This is the approach of the MathComp library in \Coq which
was used by \citet{Gonthier+13} to formalize the Feit-Thompson odd
order theorem and many other impressive results in algebra. This
representation can be seen as a normal form for the other
representations considered here. Indeed, when $R$ is discrete, it is
easy to reduce \func{ListPoly} to this form by dropping trailing
zeroes. By the equivalence of \func{ListPoly} with both \func{⊕HIT}
\func{ℕ} and \func{⊕Fun}, these can also be normalized to this form.

However, a caveat with representing polynomials on this normalized
form is that the invariant of having a nonzero last element has to be
preserved by all operations, complicating for example addition, since
trailing zeroes might have to be removed after adding the lists. One
also has to be careful not to have \Agda normalize the proofs that
the last element is nonzero. This also applies to \func{⊕Fun}, which is
also represented as a subtype. However, it does not apply to
\func{⊕HIT} and \func{ListPoly}, as these are HITs where we
instead work directly with representatives of equivalence classes of
polynomials.

\subsection{Multivariate polynomials}

Having discussed these representations of univariate polynomials, it is
natural to also consider multivariate polynomial rings
$R[X_1,\dots,X_n]$. These will be used in \cref{sec:cohomologyrings} where we
will prove that some cohomology rings are equivalent to multivariate
polynomial rings quotiented by ideals.

Often in algebra, $R[X_1,\dots,X_n]$ is represented as iterated
univariate polynomials $(((R[X_1])[X_2])\cdots)[X_n]$. This has the
benefit that properties can be proved for $R[X_1,\dots,X_n]$ by
proving that if the property holds for $R$, then it also holds for
$R[X]$, and then applying this $n$ times.
While this is convenient for proving properties of
$R[X_1,\dots,X_n]$, it is not always the best for doing computations.

To remedy this, we can let the indexing set be
$\Vect{\var{n}}{\func{ℕ}}$ in \func{⊕HIT} and obtain a direct
definition of $R[X_1,\dots,X_n]$. This definition still gives a simple
representation of polynomials where the \con{base} constructor
represents monomials; for instance, $\base{(4,0,3)}{\var{a}}$
corresponds to $aX_1^4X_3^3 \in R[X_1,X_2,X_3]$.
Having a direct definition of $R[X_1,\dots,X_n]$ is good for
computations as operations like multiplication become simpler if they
are directly defined instead of obtained by iterating the algorithm
for univariate polynomials. It also helps with proofs, e.g. proving
formally that
\[
  \Pco{R}{X_1,\dots,X_n}{X_1,\dots,X_n} \cong R
\]
using iterated univariate polynomials would be very cumbersome, but
with the \func{Vec} based representation it is easily proved by
\func{⊕HIT} induction.

For this reason, we are going to use \func{⊕HIT}
($\Vect{\var{n}}{\func{ℕ}}$) as our representation of
$R[X_1,\dots,X_n]$ in the next section. To avoid confusion when
working with two notions defined using \func{⊕HIT} (i.e. polynomials
and cohomology rings), we will from now on simply write e.g. $aX^2Y^3$
for $\base{(2,3)}{\var{a}}$ and use $0$ for \con{0⊕} as well as $+$ for \con{+⊕}
when working with $R[X_1,\dots,X_n]$.

\section{Cohomology rings}
\label{sec:cohomologyrings}

We are now finally ready to formalize cohomology rings. Recall, we
define the cohomology ring of $X$ with coefficients in a ring $R$ by
$H^*(X,R) = \bigoplus_{i:\mathbb{N}} H^i(X,R)$. In the formalization,
we use \func{⊕HIT} to formalize $H^*(X,R)$; the fact that it is a
ring follows immediately by construction, using the graded ring laws
of $\func{$\smile_h$}$.

Note that it is crucial that our definition of $\bigoplus$ avoided
assumptions about decidable equality --- it is not the case that
$H^i(X,R)$ has decidable equality for any $i$, $X$ and $R$. For a
simple counterexample, pick a ring $R$
without decidable equality (e.g. $R = \mathbb{R}$) and
consider the cohomology group $H^n(\mathbb{S}^n,R)$. As it is
isomorphic (as an abelian group) to $R$, it cannot have decidable
equality, thereby preventing us from even defining
$H^*(\mathbb{S}^n,R)$.

We will now discuss the various formalized computations of $H^*(X,R)$
for different spaces $X$ and rings $R$ which we have carried out.
We have formalized these computations as equivalences of \emph{rings},
but we make explicit in the proofs how they respect the graded
structure.
Polynomial rings are particularly useful in the computation of
these cohomology rings. While not all cohomology rings are isomorphic
to $R[X_1,\dots,X_n]/I$ for some commutative ring $R$ and ideal $I$,
there are many examples where a description using polynomial rings is
arguably the most tractable choice. These various computations will
allow us to conclude that various spaces are not equivalent, despite
having the same cohomology groups.

\subsection{Spheres}

As a warm-up, let us consider the integral cohomology ring of \func{$\mathbb{S}^1$}.
\begin{proposition}
  \label{prop:cohom-S1}
  We have $H^*(\func{$\mathbb{S}^1$},\Z) \cong \Z[X]/(X^2)$. 
\end{proposition}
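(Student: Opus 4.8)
The plan is to construct the isomorphism from the polynomial side and then verify degreewise that it is an equivalence, relying on the integral cohomology groups of the circle already computed by \citet{BLM}: $H^0(\mathbb{S}^1,\Z) \cong \Z$ (which is moreover a ring isomorphism, since $\mathbb{S}^1$ is connected and $1_h$ is the unit), $H^1(\mathbb{S}^1,\Z) \cong \Z$, and $H^n(\mathbb{S}^1,\Z) \cong 0$ for every $n \geq 2$. Dually, imposing the relation $X^2 = 0$ on $\Z[X]$ collapses all summands in degrees $\geq 2$, so the underlying graded abelian group of $\Z[X]/(X^2)$ is $\Z$ in degrees $0$ and $1$ and trivial otherwise. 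Thus the two sides abstractly share the same underlying graded group, and the real content of the statement is compatibility with the cup product.

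First I would fix the generator $\alpha : H^1(\mathbb{S}^1,\Z)$ corresponding to $1 \in \Z$ under the isomorphism above, and define a ring homomorphism $\psi : \Z[X] \to H^*(\mathbb{S}^1,\Z)$ out of the polynomial ring. Since both rings are presented via the direct-sum HIT, its recursor lets us specify $\psi$ on homogeneous generators: send $\base{0}{a}$ to $a \cdot 1_h$ (the $a$-fold sum, with the inverse taken for negative $a$), send $\base{1}{a}$ to $a \cdot \alpha$, and send $\base{n}{a}$ to $0_h$ for every $n \geq 2$. One checks that this respects the path constructors of the HIT, is additive, and is multiplicative: using the graded ring laws for $\smile_h$, multiplicativity reduces to homogeneous inputs $\base{i}{a}$ and $\base{j}{b}$, where the cases with $i + j \leq 1$ follow from unitality of $1_h$ and the cases with $i + j \geq 2$ give $0_h$ on both sides, since the cup product lands in a degree where $H^{i+j}(\mathbb{S}^1,\Z) \cong 0$. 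As $\psi$ sends $X^2$ --- the homogeneous element $\base{2}{1}$ --- to $0_h$, it vanishes on the ideal $(X^2)$ and hence descends to a ring homomorphism $\varphi : \Z[X]/(X^2) \to H^*(\mathbb{S}^1,\Z)$.

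It then remains to show that $\varphi$ is an equivalence, which I would check degreewise: in degree $0$, $\varphi$ is the ring isomorphism $\Z \cong H^0(\mathbb{S}^1,\Z)$; in degree $1$, it is the group isomorphism $\Z \cong H^1(\mathbb{S}^1,\Z)$ by the choice of $\alpha$; and in every degree $\geq 2$ both its source (because $X^2 = 0$) and its target (because $H^n(\mathbb{S}^1,\Z) \cong 0$ there) are trivial, so $\varphi$ is an equivalence in those degrees as well. Assembling the degreewise equivalences into an equivalence of the entire graded rings --- and recalling that a ring isomorphism yields a path of rings via the SIP --- gives $H^*(\mathbb{S}^1,\Z) \cong \Z[X]/(X^2)$.

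The step I expect to be most delicate is making precise the interaction between the quotient $\Z[X]/(X^2)$ and the direct-sum HIT presentation: the informal slogan ``$X^2 = 0$ kills all summands in degrees $\geq 2$'' has to be turned into a usable description of the quotient type, and one must verify that the three degreewise equivalences glue into a single equivalence that respects the grading. Checking multiplicativity of $\psi$ also forces one to thread through the transports relating $H^{i+j}(\mathbb{S}^1,\Z)$ and $H^n(\mathbb{S}^1,\Z)$ that appear in the graded cup product; but because the cohomology of $\mathbb{S}^1$ is concentrated in only two degrees, this bookkeeping remains far milder here than in the general graded-ring setting of \cref{subsec:gradedrings}.
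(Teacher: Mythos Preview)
Your proposal is correct and follows essentially the same route as the paper: both define a ring homomorphism $\psi : \Z[X] \to H^*(\func{$\mathbb{S}^1$},\Z)$ on homogeneous elements using the known isomorphisms $\Z \cong H^i(\func{$\mathbb{S}^1$},\Z)$ for $i \leq 1$, verify multiplicativity by a small case analysis, and observe that $\psi$ vanishes on $X^2$ so that it descends to the quotient.

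The one place where you diverge slightly is in the final step. The paper does not argue ``degreewise'' but instead constructs the inverse $\psi^{-1} : H^*(\func{$\mathbb{S}^1$},\Z) \to \Z[X]/(X^2)$ directly, by recursion on the direct-sum HIT underlying $H^*$: send $\con{base}\,i\,c$ to $\phi_i^{-1}(c)X^i$ for $i \leq 1$ and to $0$ otherwise, and check by induction that the two composites are identities. This sidesteps exactly the concern you flagged---having to exhibit $\Z[X]/(X^2)$ itself as a graded object and then assemble degreewise equivalences---since the inverse is defined into the quotient as an unstructured set, with no need to decompose it. Your degreewise approach is not wrong, but the explicit-inverse variant is the more economical one to formalize, and it is what the paper does.
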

\begin{proof}
  The cohomology groups of \func{$\mathbb{S}^1$} are easily proved to be:
  \[H^n(\func{$\mathbb{S}^1$}, \Z) \cong \begin{cases}
    \Z & \text{ if $n\leq \anum{1}$}\\
    0 & \text{ otherwise}
    \end{cases}
  \]
  For a synthetic proof of this, see e.g.~\citet[Section 5]{BLM}.
  For $i \leq \anum{1}$, let
  $\phi_i : \Z \cong H^i(\func{$\mathbb{S}^1$},\Z)$ be the above isomorphisms. We
  define a map $\psi : \Z[X] \to H^*(\func{$\mathbb{S}^1$},\Z)$ by:
  \begin{align*}
    \psi(0) &= \con{0\oplus} \\
    \psi(cX^0) &= \con{base}\,\anum{0}\,(\phi_0(c)) \\
    \psi(cX^1) &= \con{base}\,\anum{1}\,(\phi_1(c)) \\
    \psi(cX^{2+n}) &= \con{0\oplus} \\
    \psi(p + q)  &= \psi(p) \,\con{+\oplus} \,\psi(q) \\
    \vdots
  \end{align*}
  The remaining cases are immediate by construction as we are defining
  a map from \func{⊕HIT} to itself, and since $\phi_i$ is a family of
  group homomorphisms.  Proving that this map is a ring homomorphism
  is easy, since one only has to consider the cup product in low
  dimensions, in which case it simply corresponds to left- or
  right-multiplication over $\Z$. Moreover, as $\psi$ cancels on $X^2$,
  this allows us to view $\psi$ as a map
  $\Z[X]/(X^2) \to H^*(\func{$\mathbb{S}^1$},\Z)$. Its inverse
  $\psi^{-1} : H^*(\func{$\mathbb{S}^1$},\Z) \to \Z[X]/(X^2)$ is given
  (on homogeneous elements) by:
  \begin{align*}
    \psi^{-1}(\con{0\oplus}) &= 0 \\
    \psi^{-1}(\con{base}\,\anum{0}\,c) &= (\phi^{-1}_0(c))X^0 \\
    \psi^{-1}(\con{base}\,\anum{1}\,c) &= (\phi^{-1}_1(c))X^1 \\
    \psi^{-1}(\con{base}\,(\anum{2} + n)\,c) &= 0 \\
    \psi^{-1}(x \,\con{+\oplus} \,y)  &= \psi^{-1}(x) + \psi^{-1}(y)
  \end{align*}
  That $\psi$ and $\psi^{-1}$ cancel follows by construction.
\end{proof}
The above proof also works for higher spheres. We define these by
$(n-1)$-fold suspension of \func{$\mathbb{S}^1$}, where the suspension of a
type is defined in \CubicalAgda by
\ExecuteMetaData[agda/latex/Section5.tex]{Susp}
Let us explicitly define three recurring special cases:
\begin{mathpar}
  \func{$\mathbb{S}^2$} = \func{$\Sigma$}\,\func{$\mathbb{S}^1$} \and
  \func{$\mathbb{S}^3$} = \func{$\Sigma$}\,\func{$\mathbb{S}^2$} \and
  \func{$\mathbb{S}^4$} = \func{$\Sigma$}\,\func{$\mathbb{S}^3$}
\end{mathpar}
As our cohomology theory satisfies the Eilenberg-Steenrod axioms\footnote{See e.g.~\citet{CavalloMsc15} for the definition. The fact that our theory satisfies the axioms was proved by~\citet{BLM}.}, it follows from the
suspension axiom that, for $n \geq \anum{1}$, we have
\[
H^m(\func{$\mathbb{S}$}^n,\Z) \cong \begin{cases}
  \Z & m = \anum{0}, n = m \\
  0 & \text{otherwise}
  \end{cases}
\]
Hence, by a similar proof to that of~\cref{prop:cohom-S1}, we may conclude the following:
\begin{proposition}
  For $n \geq \anum{1}$, we have $H^*(\func{$\mathbb{S}$}^n,\mathbb{Z})\cong \mathbb{Z}[X]/(X^2)$.
\end{proposition}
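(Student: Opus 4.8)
The plan is to imitate the proof of \cref{prop:cohom-S1} almost line for line; the only genuinely new ingredient is the cohomology of $\func{$\mathbb{S}$}^n$, which is already recorded above as a consequence of the suspension axiom: $H^m(\func{$\mathbb{S}$}^n,\Z)\cong\Z$ for $m\in\{\anum{0},n\}$ and is trivial otherwise. First I would fix group isomorphisms $\phi_0:\Z\cong H^0(\func{$\mathbb{S}$}^n,\Z)$ and $\phi_n:\Z\cong H^n(\func{$\mathbb{S}$}^n,\Z)$, taking $\phi_0$ to be the canonical one sending $\anum{1}$ to $\func{$1_h$}$.

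Next I would define $\psi:\Z[X]\to H^*(\func{$\mathbb{S}$}^n,\Z)$ by $\oplus$HIT-recursion, exactly as in the $n=\anum{1}$ case but now placing the generator in degree $n$: put $\psi(0)=\con{0\oplus}$, $\psi(cX^0)=\con{base}\,\anum{0}\,(\phi_0(c))$, $\psi(cX^1)=\con{base}\,n\,(\phi_n(c))$, $\psi(cX^{2+k})=\con{0\oplus}$, and $\psi(p+q)=\psi(p)\,\con{+\oplus}\,\psi(q)$, the remaining cases being immediate since the target is again built from $\oplus$HIT and the $\phi_i$ are homomorphisms. The candidate inverse $\psi^{-1}:H^*(\func{$\mathbb{S}$}^n,\Z)\to\Z[X]/(X^2)$ is given on homogeneous elements by $\con{0\oplus}\mapsto 0$, $\con{base}\,\anum{0}\,c\mapsto(\phi_0^{-1}(c))X^0$, $\con{base}\,n\,c\mapsto(\phi_n^{-1}(c))X^1$, $\con{base}\,m\,c\mapsto 0$ for $m\notin\{\anum{0},n\}$, and $x\,\con{+\oplus}\,y\mapsto\psi^{-1}(x)+\psi^{-1}(y)$. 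That $\psi$ and $\psi^{-1}$ are mutually inverse additive homomorphisms is a routine computation built directly from $\phi_i$ and $\phi_i^{-1}$, and since $\psi(X^2)=\con{0\oplus}$ the map $\psi$ descends to $\Z[X]/(X^2)$. Note that $\psi$ carries $X$ into degree $n$, so to read the statement as a \emph{graded} ring isomorphism one regrades $\Z[X]/(X^2)$ so that $\deg X=n$, which for $n=\anum{1}$ is the usual grading.

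The only step carrying any content is verifying that $\psi$ is a ring homomorphism. By the special eliminator it suffices to check this on the point constructors, hence on homogeneous generators; the $\con{0\oplus}$ and $\con{+\oplus}$ cases are trivial. Writing $\alpha=\con{base}\,n\,(\phi_n(\anum{1}))$ for the image of $X$, the mixed products $\con{base}\,\anum{0}\,(\phi_0(c))\,\func{$\smile_h$}\,\con{base}\,n\,(\phi_n(d))$ reduce, using the unit law for $\func{$1_h$}$ and the fact that $\phi_n$ is a homomorphism, to $\con{base}\,n\,(\phi_n(cd))$, matching $\psi(cX^0\cdot dX^1)$; and the remaining product $\alpha\,\func{$\smile_h$}\,\alpha$ lands in $H^{n+n}(\func{$\mathbb{S}$}^n,\Z)$, which is trivial because $n+n\neq\anum{0}$ and $n+n\neq n$ whenever $n\geq\anum{1}$, so $\alpha\,\func{$\smile_h$}\,\alpha=\con{0\oplus}=\psi(X^1\cdot X^1)$. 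I expect the main obstacle to be purely bookkeeping: the cup-product comparison produces the customary transports along additions in $\N$ (e.g. identifying the degree $n+\anum{0}$ with $n$) together with a small lemma that $n+n\notin\{\anum{0},n\}$ for $n\geq\anum{1}$ so that $H^{n+n}$ is trivial --- the ``transport hell'' the paper keeps flagging, but it stays contained here precisely because the one genuinely new product lands in a trivial group. Once $\psi$ is known to be a ring homomorphism, the explicit inverse makes it an isomorphism, giving $H^*(\func{$\mathbb{S}$}^n,\Z)\cong\Z[X]/(X^2)$.
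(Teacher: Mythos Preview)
Your proposal is correct and follows essentially the same approach as the paper, which merely says the result follows ``by a similar proof to that of~\cref{prop:cohom-S1}'' after recording the cohomology groups of $\func{$\mathbb{S}$}^n$. You have spelled out the one detail the paper leaves implicit, namely that $\alpha\,\func{$\smile_h$}\,\alpha$ vanishes because $H^{n+n}(\func{$\mathbb{S}$}^n,\Z)$ is trivial for $n\geq\anum{1}$.
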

\noindent We remark that the exact same method may be used to compute the cohomology rings of spheres with arbitrary coefficients, although this has not yet been formalized.

\subsection{The complex projective plane}

For a slightly more technical example, let us consider
\func{$\mathbb{C}P^2$}. We define it in terms of its usual CW decomposition
(see e.g.~\citet[Example 0.6]{Hatcher2002}), using the following homotopy pushout:
\[\begin{tikzcd}
	{\func{$\mathbb{S}$}^{\func{3}}} & \{*\} \\
	{\func{$\mathbb{S}$}^{\func{2}}} & {\func{$\mathbb{C}P^2$}}
	\arrow["\mathsf{\func{hopf}}"', from=1-1, to=2-1]
	\arrow[from=2-1, to=2-2]
	\arrow[from=1-1, to=1-2]
	\arrow[from=1-2, to=2-2]
	\arrow["\lrcorner"{anchor=center, pos=0.125, rotate=180}, draw=none, from=2-2, to=1-1]
\end{tikzcd}\]
Above, $\func{hopf} : \func{$\mathbb{S}$}^{\func{3}} \to
\func{$\mathbb{S}$}^{\func{2}}$ is the generator of
$\pi_3(\func{$\mathbb{S}$}^{\func{2}})$. In
\CubicalAgda, we may define \func{$\mathbb{C}P^2$} as the following HIT:
\ExecuteMetaData[agda/latex/Section5.tex]{CP2}
For the construction of $\func{hopf}$, see \citet[Chapter 2.5]{Brunerie16}. The
cup product on \func{$\mathbb{C}P^2$} was shown by \citet[Proposition~6.3.2]{Brunerie16}, for
the first time in HoTT/UF, to induce the usual multiplication on $\Z$ via
\begin{align*}
  \Z\times \Z \cong H^2(\func{$\mathbb{C}P^2$}) \times H^2(\func{$\mathbb{C}P^2$})
  \xrightarrow{\func{\textnormal{$\smile_h$}}} H^4(\func{$\mathbb{C}P^2$}) \cong \Z
\end{align*}
This was later also formalized by~\citet{BLM}. The ring structure on
$H^*(\func{$\mathbb{C}P^2$},\Z)$ is particularly interesting, since it plays a
fundamental role in Brunerie's synthetic proof of
$\pi_4(\func{$\mathbb{S}$}^{\func{3}}) \cong \Z_2$. We can now package this up into a
more compact statement.
\begin{proposition}\label{CP2}
  $H^*(\func{$\mathbb{C}P^2$}, \Z) \cong \Z[X]/(X^3)$
\end{proposition}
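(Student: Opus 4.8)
The plan is to proceed exactly as in the proof of \cref{prop:cohom-S1}: I will build a ring homomorphism $\psi$ out of the polynomial ring, observe that it annihilates the ideal $(X^3)$, and exhibit an inverse on homogeneous elements, so that the only real content is a single cup-product computation in middle degrees.

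First I would record the cohomology groups of $\func{$\mathbb{C}P^2$}$. From its defining pushout — concretely, via the Mayer--Vietoris sequence coming from the Eilenberg--Steenrod axioms, as already carried out by \citet{BLM} — we obtain group isomorphisms $\phi_0 : \Z \cong H^0(\func{$\mathbb{C}P^2$},\Z)$, $\phi_2 : \Z \cong H^2(\func{$\mathbb{C}P^2$},\Z)$ and $\phi_4 : \Z \cong H^4(\func{$\mathbb{C}P^2$},\Z)$, with $H^m(\func{$\mathbb{C}P^2$},\Z) \cong 0$ for all other $m$. I will choose $\phi_0$ so that $\phi_0(\anum{1}) = \func{$1_h$}$, and take $\phi_2$ and $\phi_4$ to be the isomorphisms appearing in Brunerie's computation of the cup product (see below). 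I then define $\psi : \Z[X] \to H^*(\func{$\mathbb{C}P^2$},\Z)$ by \func{⊕HIT} recursion, sending $cX^0 \mapsto \con{base}\,\anum{0}\,(\phi_0(c))$, $cX^1 \mapsto \con{base}\,\anum{2}\,(\phi_2(c))$, $cX^2 \mapsto \con{base}\,\anum{4}\,(\phi_4(c))$, $cX^{3+n} \mapsto \con{0\oplus}$, and a sum to the corresponding \con{+\oplus}. This map doubles degrees, so it is not graded on the nose; as in the sphere case we only claim a ring equivalence, but I would make explicit in the formalization that $\psi$ sends polynomial degree $d$ to cohomological degree $2d$. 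That $\psi$ respects $\con{0\oplus}$, \con{+\oplus}, and the path constructors of \func{⊕HIT} is immediate, since each $\phi_i$ is a group homomorphism and the target is a set.

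For multiplicativity it suffices, by additivity in each variable, to check $\psi(cX^i \cdot dX^j) = \psi(cX^i) \,\func{$\smile_h$}\, \psi(dX^j)$ on monomials, together with $\psi(\anum{1}) = \func{$1_h$}$ (which holds by the choice of $\phi_0$). The case split is short. When $i + j \geq 3$ both sides are \con{0\oplus}: either a factor is already \con{0\oplus} and \con{0\oplus} annihilates, or the product lands in some $H^{2(i+j)}(\func{$\mathbb{C}P^2$},\Z)$ with $2(i+j) \geq 6$, which is trivial. When $i = 0$ (symmetrically $j = 0$) the identity reduces to the unit law $\func{$1_h$} \,\func{$\smile_h$}\, z = z$ together with $\phi_0(\anum{1}) = \func{$1_h$}$ and $\Z$-linearity. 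The sole nontrivial case is $i = j = 1$, i.e. $X \cdot X = X^2$, which by bilinearity reduces to $(\phi_2(\anum{1})) \,\func{$\smile_h$}\, (\phi_2(\anum{1})) = \phi_4(\anum{1})$ in $H^4(\func{$\mathbb{C}P^2$},\Z)$ — that a generator of $H^2$ cups with itself to a generator of $H^4$. This is precisely \citet[Proposition~6.3.2]{Brunerie16}, formalized by \citet{BLM}, which I would invoke directly.

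Since $\psi$ vanishes on every $X^{3+n}$, it annihilates the ideal $(X^3)$ and hence descends to $\bar\psi : \Z[X]/(X^3) \to H^*(\func{$\mathbb{C}P^2$},\Z)$. I would then define the inverse on homogeneous elements by $\con{0\oplus} \mapsto 0$, $\con{base}\,\anum{0}\,c \mapsto (\phi_0^{-1}(c))X^0$, $\con{base}\,\anum{2}\,c \mapsto (\phi_2^{-1}(c))X^1$, $\con{base}\,\anum{4}\,c \mapsto (\phi_4^{-1}(c))X^2$, $\con{base}\,m\,c \mapsto 0$ for all other $m$ (well-defined since those cohomology groups are trivial), and $x \,\con{+\oplus}\, y \mapsto \psi^{-1}(x) + \psi^{-1}(y)$; well-definedness against the \func{⊕HIT} path constructors again uses that the $\phi_i^{-1}$ are group homomorphisms. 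That $\bar\psi$ and $\psi^{-1}$ are mutually inverse then follows by construction, by \func{⊕HIT} induction, exactly as in \cref{prop:cohom-S1}. The main obstacle is therefore the one cup-product identity $(\phi_2(\anum{1}))\,\func{$\smile_h$}\,(\phi_2(\anum{1})) = \phi_4(\anum{1})$; since that is cited as already available, the rest is the same routine assembly of isomorphisms and case analysis as for $\func{$\mathbb{S}^1$}$, with degrees doubled and one extra polynomial degree to track.
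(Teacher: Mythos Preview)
Your proposal is correct and follows essentially the same approach as the paper: define $\psi$ on monomials via the isomorphisms $\phi_0,\phi_2,\phi_4$, reduce multiplicativity to the single identity $\phi_2(\anum{1})\,\func{$\smile_h$}\,\phi_2(\anum{1}) = \phi_4(\anum{1})$ (which the paper phrases as $\alpha\,\func{$\smile_h$}\,\alpha \equiv \beta$, invoking the Gysin sequence from \citet[Chapter~6]{Brunerie16}), descend modulo $(X^3)$, and construct the inverse degreewise. Your case analysis for multiplicativity is slightly more explicit than the paper's, but the content is identical.
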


\begin{proof}
  The groups $H^n(\func{$\mathbb{C}P^2$},\Z)$ are described in \cref{table:cp2}.
  \begin{table}[h!]
    \centering
    \begin{tabular}{ |c|c|c| }
      \hline \textbf{Dimension} & \textbf{Isomorphic to} &
     \textbf{Generator} \\ \hline $H^0$ & $\Z$ & $\eta$
     \\ $H^2$ & $\Z$ & $\alpha$ \\ $H^4$ & $\Z$ &
     $\beta$ \\ $H^{n}, n \neq 0,2,4$ & $\mathbb{0}$ & $0$ \\ \hline
    \end{tabular}
    \caption{Cohomology groups of \func{$\mathbb{C}P^2$}.}
    \label{table:cp2}
  \end{table}

  Let us denote by $\phi_i$ the isomorphisms
  $\Z \cong H^i(\func{$\mathbb{C}P^2$},\Z)$, for $i \in \{0,2,4\}$. We
  define $\psi : \Z[X] \to H^*(\func{$\mathbb{C}P^2$},\Z)$ by:
\begin{align*}
  \psi(0) &= \con{0\oplus} \\
  \psi(cX^0) &= \con{base}\,\anum{0}\,(\phi_0(c)) \\
  \psi(cX^1) &= \con{base}\,\anum{2}\,(\phi_2(c))\\
  \psi(cX^2) &= \con{base} \,\anum{4}\,(\phi_4(c))\\
  \psi(cX^{3+n}) &= \con{0\oplus} \\
  \psi(p + q) &= \psi(p) \,\con{+\oplus} \,\psi(q)
  \\ \vdots
\end{align*}
The left out cases are automatic since the above, by definition, is a
homomorphism of abelian groups. Proving that the map is a ring
homomorphism boils down to showing one non-trivial identity:
\begin{align*}
  \psi(c_1X^1 \cdot c_2X^1) \,\func{$\equiv$}\, \psi(c_1X^1) \,\func{$\smile$}\, \psi(c_2X^1)
\end{align*}
This follows by $\alpha \,\func{$\smile_h$}\, \alpha
\,\func{$\equiv$}\, \beta$, which can be shown by an application of
the \emph{Gysin sequence} (see~\cite[Chapter~6]{Brunerie16}). We have:
\begin{align*}
  \psi(c_1X^1 \cdot c_2X^1) \,&\func{$\equiv$}\, \con{base} \,\anum{4}\,(\phi_4(c_1c_2)) \\
  &\func{$\equiv$}\, c_1c_2 \cdot \con{base} \,\anum{4}\,(\phi_4(\anum{1})) \\
  &\func{$\equiv$}\, c_1c_2 \cdot \con{base} \,\anum{4}\,\beta \\
  &\func{$\equiv$}\, c_1c_2 \cdot \con{base} \,\anum{4}\,(\alpha \,\func{$\smile_h$}\, \alpha) \\
  &\func{$\equiv$}\, c_1c_2 \cdot (\con{base} \,\anum{2}\,\alpha \,\func{$\smile$}\, \con{base} \,\anum{2}\,\alpha) \\
  &\func{$\equiv$}\, c_1c_2 \cdot (\con{base} \,\anum{2}\,(\phi_2\,\anum{1}) \,\func{$\smile$}\, \con{base} \,\anum{2}\,(\phi_2\,\anum{1})) \\
  &\func{$\equiv$}\, \con{base} \,\anum{2}\,(\phi_2(c_1)) \,\func{$\smile$}\, \con{base} \,\anum{2}\,(\phi_2(c_2)) \\
  &\func{$\equiv$}\, \psi(c_1X^1) \,\func{$\smile$} \, \psi(c_2X^1)
\end{align*}
Moreover, as $\psi(X^3) = \con{0\oplus}$, we may view $\psi$ as a homomorphism
$\Z[X]/(X^3) \to H^*(\func{$\mathbb{C}P^2$},\Z)$. For its inverse, one proceeds
just like in~\cref{prop:cohom-S1}, and proving that the maps cancel
is equally straightforward.
\end{proof}

Cohomology rings are particularly useful for distinguishing ``similar''
spaces. Compare, for instance, \func{$\mathbb{C}P^2$} with the wedge sum
$\SW$, the space obtained by gluing
together the base points of \func{$\mathbb{S}^{\func{2}}$} and
\func{$\mathbb{S}^{\func{4}}$}. In~\CubicalAgda, we may define the wedge sum of two
pointed types by
\ExecuteMetaData[agda/latex/Section5.tex]{Wedge}

The cohomology groups of \func{$\mathbb{C}P^2$} and $\SW$ are the same in each dimension. There is an intuitive
reason for this: $\SW$ has one connected
component, one $3$-dimensional hole, one $5$-dimensional hole, and
nothing more. Hence, the cohomology groups should coincide with those
of \func{$\mathbb{C}P^2$}. However, the two spaces are not homotopy equivalent.
One way to see this is to define a predicate
$P : \mathbb{N}^2 \times \func{Type} \to \func{hProp}$ such that
$P(n,m,X)$ expresses that the cup product
$\func{$\smile_h$}\, : H^n(X,\Z) \times H^{m}(X,\Z) \to H^{n+m}(X,\Z)$ is
trivial. For our two spaces, we can show that
$P(\anum{2},\anum{2},\SW)$ holds, while
$P(\anum{2},\anum{2},\func{$\mathbb{C}P^2$})$ does not. This method was used by~\citet{BLM}
to distinguish the torus from
$\func{$\mathbb{S}$}^{\func{2}}\,\func{∨}\,\func{$\mathbb{S}^1$}\,\func{∨}\,\func{$\mathbb{S}^1$}$. This approach is,
however, rather ad hoc, as the predicate $P$ will have to be adjusted
if we want to capture other aspects of \func{$\smile_h$} than
triviality. With cohomology rings, we may instead simply conclude that
$\func{$\mathbb{C}P^2$}\, \not\simeq \, \SW$ from the
fact that
$H^*(\func{$\mathbb{C}P^2$},\Z) \,\not \cong \, H^*(\SW,\Z)$.

\begin{proposition}\label{S2wedgeS1}
  $H^{*}(\SW,\Z) \,\func{$\cong$}\,
  \Z[X,Y]/(X^2,XY,Y^2)$
\end{proposition}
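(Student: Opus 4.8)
The plan is to follow the proof of~\cref{CP2} and exhibit a pair of mutually inverse ring homomorphisms between $\Z[X,Y]/(X^2,XY,Y^2)$ and $H^*(\SW,\Z)$. First I would pin down the cohomology groups: by the standard decomposition of the cohomology of a wedge sum (for connected summands, in positive degrees $H^n(A\vee B,\Z)\cong H^n(A,\Z)\times H^n(B,\Z)$, a Mayer--Vietoris computation) together with the known cohomology of spheres, we get $H^0(\SW,\Z)\cong\Z$, $H^2(\SW,\Z)\cong\Z$, $H^4(\SW,\Z)\cong\Z$, and $H^n(\SW,\Z)\cong 0$ for every other $n$. Write $\phi_0,\phi_2,\phi_4$ for the isomorphisms $\Z\cong H^i(\SW,\Z)$, with $\phi_0$ chosen so that $\phi_0(\anum{1})=\func{$1_h$}$, and set $\alpha=\phi_2(\anum{1})$ and $\beta=\phi_4(\anum{1})$.

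Next I would define $\psi:\Z[X,Y]\to H^*(\SW,\Z)$ by recursion on \func{⊕HIT}, sending $0\mapsto\con{0\oplus}$, the monomials $cX^0Y^0\mapsto\con{base}\,\anum{0}\,(\phi_0(c))$, $cX^1Y^0\mapsto\con{base}\,\anum{2}\,(\phi_2(c))$ and $cX^0Y^1\mapsto\con{base}\,\anum{4}\,(\phi_4(c))$, every other monomial $cX^iY^j\mapsto\con{0\oplus}$, and $p+q\mapsto\psi(p)\,\con{+\oplus}\,\psi(q)$. The path constructors of \func{⊕HIT} are respected since each $\phi_i$ is a group homomorphism, and then $\psi$ is a homomorphism of abelian groups by construction. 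To see that it is a ring homomorphism, note that it sends the unit $X^0Y^0$ to $\con{base}\,\anum{0}\,(\phi_0(\anum{1}))=\func{$1_h$}$, the unit of $H^*(\SW,\Z)$; and for multiplicativity it suffices, by bilinearity, to treat pairs of monomials. Pairs involving $X^0Y^0$ are handled by the unit laws for $\func{$\smile_h$}$. For a pair $(m,n)$ of monomials neither of which is $X^0Y^0$, the product $mn$ is again a monomial distinct from $X^0Y^0$, $X^1Y^0$ and $X^0Y^1$, so $\psi(mn)=\con{0\oplus}$, and $\psi(m)\,\func{$\smile$}\,\psi(n)$ equals $\con{0\oplus}$ as well, either because one factor is $\con{0\oplus}$ (which annihilates) or because the result lands in a trivial group --- e.g. $\psi(X)\,\func{$\smile$}\,\psi(Y)=\con{base}\,\anum{6}\,(\alpha\,\func{$\smile_h$}\,\beta)=\con{0\oplus}$ as $H^6(\SW,\Z)\cong 0$, and similarly $\psi(Y)\,\func{$\smile$}\,\psi(Y)$ lands in $H^8(\SW,\Z)\cong 0$. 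The only genuinely new obligation is $X\cdot X$, where $\psi(X)\,\func{$\smile$}\,\psi(X)=\con{base}\,\anum{4}\,(\alpha\,\func{$\smile_h$}\,\alpha)$, so I must prove $\alpha\,\func{$\smile_h$}\,\alpha=\func{$0_h$}$ in $H^4(\SW,\Z)\cong\Z$. Granting this, $\psi$ kills $X^2$, $XY$ and $Y^2$ and so may be viewed as a ring homomorphism $\Z[X,Y]/(X^2,XY,Y^2)\to H^*(\SW,\Z)$.

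I expect the identity $\alpha\,\func{$\smile_h$}\,\alpha=\func{$0_h$}$ to be the main obstacle, and I would establish it by a retraction argument rather than by a direct computation from the pushout presentation of $\SW$. Let $q:\SW\to\mathbb{S}^2$ collapse the $\mathbb{S}^4$-summand to the basepoint; it is a retraction of the inclusion $\iota:\mathbb{S}^2\to\SW$ of the other summand. Under the wedge decomposition $H^2(\SW,\Z)\cong H^2(\mathbb{S}^2,\Z)\times H^2(\mathbb{S}^4,\Z)\cong\Z\times 0$ --- pullback along the two inclusions, with inverse the sum of the pullbacks along the two collapse maps --- the generator $\alpha$ equals $q^*(\alpha')$ for a generator $\alpha'$ of $H^2(\mathbb{S}^2,\Z)$. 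Since pullback preserves the cup product, $\alpha\,\func{$\smile_h$}\,\alpha=q^*(\alpha'\,\func{$\smile_h$}\,\alpha')$, and $\alpha'\,\func{$\smile_h$}\,\alpha'\in H^4(\mathbb{S}^2,\Z)\cong 0$ vanishes, hence $\alpha\,\func{$\smile_h$}\,\alpha=\func{$0_h$}$. The delicate points will be making the wedge decomposition explicit enough to recognise $\alpha$ as a pullback along $q$, and having naturality of $\func{$\smile_h$}$ available in exactly the form required.

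Finally I would define the inverse $\psi^{-1}:H^*(\SW,\Z)\to\Z[X,Y]/(X^2,XY,Y^2)$ on homogeneous generators by $\con{0\oplus}\mapsto 0$, $\con{base}\,\anum{0}\,c\mapsto(\phi_0^{-1}(c))X^0Y^0$, $\con{base}\,\anum{2}\,c\mapsto(\phi_2^{-1}(c))X$, $\con{base}\,\anum{4}\,c\mapsto(\phi_4^{-1}(c))Y$, $\con{base}\,n\,c\mapsto 0$ for $n\notin\{\anum{0},\anum{2},\anum{4}\}$, and $x\,\con{+\oplus}\,y\mapsto\psi^{-1}(x)+\psi^{-1}(y)$, with well-definedness checked as before. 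That the two maps cancel then follows from the special eliminators for \func{⊕HIT}, just as in~\cref{prop:cohom-S1} and~\cref{CP2}, which gives the claimed ring isomorphism. As in the other computations, I would also record that the isomorphism is compatible with the grading once the bidegrees of $\Z[X,Y]$ are collapsed along $(i,j)\mapsto 2i+4j$, so that $X$ lies in cohomological degree $2$ and $Y$ in degree $4$, matching $H^2$ and $H^4$.
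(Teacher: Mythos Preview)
Your proposal is correct and follows essentially the same route as the paper: define $\psi$ on monomials exactly as you do, check it is a ring homomorphism vanishing on $(X^2,XY,Y^2)$, and invert it on homogeneous elements as in \cref{prop:cohom-S1,CP2}. The paper's proof is deliberately terse and simply asserts that the verification is ``equally straightforward'' as before; you go further by isolating the one genuinely nontrivial obligation, $\alpha\,\func{$\smile_h$}\,\alpha = \func{$0_h$}$ in $H^4(\SW,\Z)$, and supplying a clean retraction argument via the collapse $q:\SW\to\func{$\mathbb{S}^2$}$ --- the paper only alludes to this triviality in the surrounding discussion (``$P(\anum{2},\anum{2},\SW)$ holds'') without spelling out a method.
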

\begin{proof}
  Let $\phi_i$ again be the
  isomorphisms $\Z \cong H^n(\SW,\Z)$, for
  $i \in \{0,2,4\}$. We define $\psi : \Z[X,Y] \to
  H^{*}(\SW,\Z)$ by
  \begin{align*}
    \psi(0) &= \con{0\oplus} \\
    \psi(c) &= \con{base}\,\anum{0}\,(\phi_0(c)) \\
    \psi(cX) &= \con{base}\,\anum{2}\,(\phi_2(c)) \\
    \psi(cY) &= \con{base}\,\anum{4}\,(\phi_4(c)) \\
    \psi(cX^nY^m) &= \con{0\oplus} & \text{if }n+m \geq 2\\
    \psi(p + q)  &= \psi(p) \,\con{+\oplus} \,\psi(q)
  \end{align*}

  That $\psi$ induces an isomorphism $\Z[X,Y]/(X^2,XY,Y^2)
  \cong H^{*}(\SW,\Z)$ is equally
  straightforward to verify as in the proofs of \cref{prop:cohom-S1,CP2}.
\end{proof}

\noindent Combining \cref{CP2,S2wedgeS1} we get the following:

\begin{corollary}
  $\func{$\mathbb{C}P^2$}\, \not\simeq\, \SW$
\end{corollary}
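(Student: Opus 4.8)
The plan is to read the corollary off from \cref{CP2,S2wedgeS1} together with the fact that $H^*(-,\Z)$ sends equivalent spaces to isomorphic rings. First I would observe that an equivalence $\func{$\mathbb{C}P^2$} \simeq \SW$ gives, by univalence, an identification of the two types, and hence an isomorphism of rings $H^*(\func{$\mathbb{C}P^2$},\Z) \cong H^*(\SW,\Z)$ which moreover respects the $\N$-grading. Composing with the two propositions, it then suffices to derive a contradiction from a graded ring isomorphism $f : \Z[X]/(X^3) \cong \Z[X,Y]/(X^2,XY,Y^2)$, where on the left $X$ lives in degree $2$, and on the right $X$ lives in degree $2$ and $Y$ in degree $4$ (these are exactly the gradings used in the proofs of \cref{CP2,S2wedgeS1}).

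The contradiction comes from degree $2$. Since $f$ preserves the grading, it restricts to an isomorphism between the degree-$2$ components, each of which is free of rank one on the corresponding generator $X$; hence $f(X) = \pm X$. But then $f(X^2) = f(X) \cdot f(X) = X^2$, which is $0$ in $\Z[X,Y]/(X^2,XY,Y^2)$, whereas $X^2 \neq 0$ in $\Z[X]/(X^3)$ --- contradicting injectivity of $f$. (If one prefers to avoid mentioning the grading: a ring isomorphism preserves $1$ and is therefore $\Z$-linear, so writing $f(X) = a + bX + cY$ with $a,b,c : \Z$, the relation $f(X)^3 = f(X^3) = 0$ forces $a = 0$, and then $f(X)^2 = 0$, giving the same contradiction.) Hence no such $f$ exists, and we conclude $\func{$\mathbb{C}P^2$}\, \not\simeq\, \SW$.

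I do not expect a genuine obstacle: the topological content is entirely in \cref{CP2,S2wedgeS1}, and what remains is the elementary fact that $\Z[X]/(X^3)$ has a degree-$2$ element whose square is nonzero whereas in $\Z[X,Y]/(X^2,XY,Y^2)$ every product of degree-$2$ elements vanishes --- i.e. the cup product $H^2 \times H^2 \to H^4$ is nontrivial for $\func{$\mathbb{C}P^2$}$ but trivial for $\SW$, as already noted in the discussion preceding \cref{S2wedgeS1}. The only point requiring a little care in the formalization is to use that the isomorphisms of \cref{CP2,S2wedgeS1} are graded, so that the argument can be carried out one degree at a time rather than appealing to a bare ring-theoretic non-isomorphism.
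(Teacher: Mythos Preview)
Your proposal is correct and follows the same line as the paper: the corollary is stated as an immediate consequence of \cref{CP2,S2wedgeS1}, and the paper's surrounding discussion makes exactly the point you spell out, namely that the cup product $H^2 \times H^2 \to H^4$ is nontrivial for $\func{$\mathbb{C}P^2$}$ but trivial for $\SW$. The paper does not give a separate proof of the corollary beyond ``combining \cref{CP2,S2wedgeS1}'', so your explicit argument that a graded isomorphism would force $f(X)=\pm X$ and hence $f(X^2)=0$ simply fills in the algebraic step the paper leaves implicit.
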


\subsection{The Klein bottle and the real projective plane with an
  adjoined circle}

Sometimes integral cohomology rings are not enough to distinguish
spaces and we have to change the coefficient ring to something else
than $\Z$. For an example of this, let us introduce two new spaces: the
Klein bottle \func{K²} and the real projective plane \func{ℝP²}. In
\CubicalAgda, they are defined by:
\ExecuteMetaData[agda/latex/Section5.tex]{Klein}
\ExecuteMetaData[agda/latex/Section5.tex]{RP2}
The intuition behind
these HITs is that they precisely capture the usual folding
diagrams representing each space. \newline
\begin{minipage}[b]{4cm}
  \begin{center}
\[\begin{tikzcd}
	\bullet & \bullet \\
	\bullet & \bullet
	\arrow["{\con{line2}}"', from=2-1, to=2-2]
	\arrow["{\con{line2}}", from=1-1, to=1-2]
	\arrow["{\con{line1}}"', from=1-1, to=2-1]
	\arrow["{\con{line1}}"', from=2-2, to=1-2]
\end{tikzcd}\]
$\Klein$
\end{center}
\end{minipage}
\begin{minipage}[b]{4cm}
  \begin{center}
\[\begin{tikzcd}
	\bullet \\
	\bullet
	\arrow["{\con{line}}"', bend left=30, swap, from=2-1, to=1-1]
	\arrow["{\con{line}}"', bend left=30, swap, from=1-1, to=2-1]
\end{tikzcd}\]
\newline
$\RP$
  \end{center}
\end{minipage}
\newline

\text{Consider} $\Klein$ and $\RPW$. The two spaces both have integral cohomology groups
\[
H^n(\Klein,\mathbb{Z}) \cong H^n(\RPW,\mathbb{Z}) \cong \begin{cases}
  \Z, & n \leq 1 \\
  \Z_2 & n = 2 \\
  0 & n > 2
  \end{cases}
\]
One can show that $\func{$\smile_h$}\, : H^1(X,\Z) \times H^1(X,\Z) \to H^2(X,\Z)$
vanishes for both spaces. Thus, we can easily show the following:
\begin{proposition}
  We have the following isomorphisms:
  \[
    H^*(\Klein,\Z) \cong H^*(\RPW,\Z)\cong
    \Z[X,Y]/(X^2,XY,2Y,Y^2)
  \]
\end{proposition}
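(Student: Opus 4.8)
The plan is to follow the template of \cref{prop:cohom-S1,CP2,S2wedgeS1}: construct an explicit ring isomorphism between $H^*(\Klein,\Z)$ and $\Z[X,Y]/(X^2,XY,2Y,Y^2)$. Since the cohomology groups and the vanishing of the degree-$(\anum{1},\anum{1})$ cup product recalled above are identical for $\Klein$ and $\RPW$, the very same construction will handle $\RPW$, and then $H^*(\Klein,\Z)\cong H^*(\RPW,\Z)$ follows by composition; so write $H^n := H^n(\Klein,\Z)$ and work with a single space. We assign $X$ degree $\anum{1}$ and $Y$ degree $\anum{2}$, so that the graded pieces of the quotient ring ($\Z$ in degrees $\anum{0}$ and $\anum{1}$, $\Z/2\Z$ in degree $\anum{2}$, $0$ above) match $H^0, H^1, H^2, \dots$. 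Let $\phi_0 : \Z \cong H^0$ and $\phi_1 : \Z \cong H^1$ be the given isomorphisms, chosen with $\phi_0(\anum{1}) = 1_h$, and let $\phi_2 : \Z \to H^2$ be reduction modulo $\anum{2}$ followed by the isomorphism $\Z_2 \cong H^2$; the key point is that $\phi_2$ is surjective but not injective. I would then define $\psi : \Z[X,Y] \to H^*(\Klein,\Z)$ on monomials by
\[
\psi(c) = \con{base}\,\anum{0}\,(\phi_0(c)),\qquad \psi(cX) = \con{base}\,\anum{1}\,(\phi_1(c)),\qquad \psi(cY) = \con{base}\,\anum{2}\,(\phi_2(c)),
\]
with $\psi(cX^nY^m) = \con{0\oplus}$ for every other monomial, and $\psi(p+q) = \psi(p)\,\con{+\oplus}\,\psi(q)$; as in the earlier proofs the additivity constructors are respected since the $\phi_i$ are group homomorphisms and the target is a set.

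Next I would check that $\psi$ is a ring homomorphism that annihilates the ideal $(X^2,XY,2Y,Y^2)$, so that it descends to $\bar\psi : \Z[X,Y]/(X^2,XY,2Y,Y^2) \to H^*(\Klein,\Z)$. As $\psi$ kills every monomial except $\anum{1}$, $X$ and $Y$, the only products that need attention are $X\cdot X$, $X\cdot Y$ and $Y\cdot Y$ (products with $\anum{1}$ being handled by $1_h$ being a multiplicative unit). Now $\psi(X)\,\func{$\smile$}\,\psi(X) = \con{base}\,\anum{2}\,(\phi_1(\anum{1})\,\func{$\smile_h$}\,\phi_1(\anum{1}))$, which equals $\con{0\oplus}$ exactly by the stated triviality of $\func{$\smile_h$} : H^1\times H^1\to H^2$; while $\psi(X)\,\func{$\smile$}\,\psi(Y)$ and $\psi(Y)\,\func{$\smile$}\,\psi(Y)$ take values in $H^3$ and $H^4$, both trivial, hence are automatically $\con{0\oplus}$. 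The same computations show $\psi(X^2)=\psi(XY)=\psi(Y^2)=\con{0\oplus}$, and $\psi(2Y) = \con{base}\,\anum{2}\,(\phi_2(\anum{2})) = \con{base}\,\anum{2}\,0 = \con{0\oplus}$ since $\phi_2(\anum{2})=0$ in $\Z_2$ and $\con{base}$ of the zero element is $\con{0\oplus}$.

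For the inverse I would use that the degree-$\anum{2}$ component of $\Z[X,Y]/(X^2,XY,2Y,Y^2)$ is $\langle Y\rangle$, which, because of the relation $2Y$, is isomorphic to $\Z/2\Z$ and hence canonically to $H^2\cong\Z_2$; write $\theta : H^2\cong\langle Y\rangle$ for this isomorphism. Define $\psi^{-1} : H^*(\Klein,\Z)\to\Z[X,Y]/(X^2,XY,2Y,Y^2)$ on homogeneous elements by $\psi^{-1}(\con{0\oplus})=0$, $\psi^{-1}(\con{base}\,\anum{0}\,c)=\phi_0^{-1}(c)$, $\psi^{-1}(\con{base}\,\anum{1}\,c)=(\phi_1^{-1}(c))X$, $\psi^{-1}(\con{base}\,\anum{2}\,c)=\theta(c)$, $\psi^{-1}(\con{base}\,(\anum{3}+n)\,c)=0$ (the coefficient lives in a trivial group anyway), and $\psi^{-1}(x\,\con{+\oplus}\,y)=\psi^{-1}(x)+\psi^{-1}(y)$. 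That $\bar\psi$ and $\psi^{-1}$ cancel then follows by \func{⊕HIT} induction exactly as in \cref{prop:cohom-S1,CP2}, using that $\phi_0,\phi_1$ are isomorphisms and that $\theta$ is compatible with $\phi_2$ (i.e. $\theta\circ\phi_2$ sends $c$ to $cY$).

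The main obstacle, and what makes this a touch more delicate than \cref{prop:cohom-S1,CP2,S2wedgeS1}, is the torsion in degree $\anum{2}$: one must take care that well-definedness of $\psi$ on the quotient genuinely uses the relation $2Y$ (because $\phi_2$ is only surjective, not an isomorphism), and that the inverse is built from the identification $H^2\cong\Z/2\Z\cong\langle Y\rangle$ rather than from a nonexistent inverse of $\phi_2$. Everything else is a routine adaptation of the warm-up computations, requiring no input about the spaces beyond the cohomology groups and the cup-product vanishing recalled just above.
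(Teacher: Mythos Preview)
Your proposal is correct and follows essentially the same approach as the paper: map $X$ to the generator of $H^1$ and $Y$ to the generator of $H^2$, use the vanishing of $\func{$\smile_h$}:H^1\times H^1\to H^2$ for the $X^2$ relation, the triviality of $H^3$ and $H^4$ for $XY$ and $Y^2$, and the $2$-torsion in $H^2$ for the $2Y$ relation. You supply considerably more detail than the paper's brief sketch (in particular the explicit handling of $\phi_2$ as a surjection and the inverse via $\theta$), but the underlying strategy is identical.
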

\begin{proof}
  Let $S$ be either of the two spaces in question. The proof is close
  to identical to that of \cref{CP2} -- we let $X$ correspond to the
  generator of $H^1(S,\Z)$ and $Y$ correspond to the generator of
  $H^2(S,\Z)$. The only difference is the $2$-torsion in $H^2(S,\Z)$
  for both spaces, which is accounted for by the factor of $2Y$ in the
  quotient.
\end{proof}
We can, however, distinguish $\Klein$ and $\RPW$ by computing their cohomology rings with $\Z_2$
coefficients. In this case, both spaces still have the same non-trivial cohomology
groups, as described in \cref{table:k2}.
\begin{table}[h!]
  \centering
  \begin{tabular}{|c|c|c|c|}
    \hline
    \textbf{Dim.} & \textbf{Space} & \textbf{Isomorphic to} & \textbf{Generators} \\
    \hline
    $H^0$ & \begin{tabular}{c} $\Klein$ \\ $\RPW$ \end{tabular} & $\Z_2$ & \begin{tabular}{c} $\eta_1$ \\ $\eta_2$ \end{tabular}\\
    \hline
    $H^1$ & \begin{tabular}{c} $\Klein$ \\ $\RPW$ \end{tabular} & $\Z_2 \times \Z_2$ & \begin{tabular}{c} $\alpha_1,\beta_1$ \\ $\alpha_2,\beta_2$ \end{tabular}\\
    \hline
    $H^2$ & \begin{tabular}{c} $\Klein$ \\ $\RPW$ \end{tabular} & $\Z_2$ & \begin{tabular}{c} $\gamma_1$ \\ $\gamma_2$ \end{tabular}\\
    \hline
  \end{tabular}
  \caption{Cohomology groups of $\Klein$ and $\RPW$.}
  \label{table:k2}
\end{table}

The characterization of the cup product on the rings is non-trivial
and will have to be covered in a separate paper. For $\Klein$ it is
described, on generators, by
\begin{align}\label{cupIds1}
\alpha_1^2 \,\,\func{$\equiv$}\,\, \alpha_1 \,\func{$\smile_h$}\, \beta_1 \,\,\func{$\equiv$}\,\, \beta_1 \,\func{$\smile_h$}\, \alpha_1 \,\,\func{$\equiv$}\,\, \gamma_1 \quad \textrm{and} \quad
\beta_1^2 \,\,\func{$\equiv$}\,\, 0.
\end{align}
On $\RPW$ it is described by
\begin{align}\label{cupIds2}
\alpha_2 \,\func{$\smile_h$}\, \beta_2 \,\,\func{$\equiv$}\,\, \beta_2 \,\func{$\smile_h$}\, \alpha_2 \,\,\func{$\equiv$}\,\, \beta_2^2 \,\,\func{$\equiv$}\,\, 0 \quad \textrm{and} \quad \alpha_2^2 \,\,\func{$\equiv$}\,\, \gamma_2.
\end{align}
These relations let us describe the two cohomology rings.
\begin{proposition}\label{theorem:Klein}
  $H^*(\Klein,\ZMod) \cong \ZMod[X,Y]/(X^3,Y^2,X^2+XY)$
\end{proposition}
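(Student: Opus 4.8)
The plan is to follow the template of the proof of \cref{CP2}, the one novelty being that $H^1(\Klein,\ZMod)$ is a \emph{product} of two copies of $\ZMod$, so the polynomial side needs two degree-one generators. Recall from \cref{table:k2} that $H^0(\Klein,\ZMod)\cong\ZMod$, $H^1(\Klein,\ZMod)\cong\ZMod\times\ZMod$ and $H^2(\Klein,\ZMod)\cong\ZMod$, while $H^n(\Klein,\ZMod)\cong 0$ for $n>\anum{2}$. First I would fix isomorphisms $\phi_0:\ZMod\cong H^0(\Klein,\ZMod)$, $\phi_2:\ZMod\cong H^2(\Klein,\ZMod)$ and $\phi_1:\ZMod\times\ZMod\cong H^1(\Klein,\ZMod)$, the last chosen so that $\alpha_1=\phi_1(\anum{1},\anum{0})$ and $\beta_1=\phi_1(\anum{0},\anum{1})$ are the generators appearing in~\eqref{cupIds1}. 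I would then define a map $\psi:\ZMod[X,Y]\to H^*(\Klein,\ZMod)$, by recursion on the \func{⊕HIT} $\ZMod[X,Y]$, via
\begin{align*}
  \psi(0) &= \con{0\oplus} \\
  \psi(c) &= \con{base}\,\anum{0}\,(\phi_0(c)) \\
  \psi(cX) &= \con{base}\,\anum{1}\,(\phi_1(c,\anum{0})) \\
  \psi(cY) &= \con{base}\,\anum{1}\,(\phi_1(\anum{0},c)) \\
  \psi(cX^2) = \psi(cXY) &= \con{base}\,\anum{2}\,(\phi_2(c)) \\
  \psi(cX^nY^m) &= \con{0\oplus} \quad \text{for every other monomial} \\
  \psi(p + q) &= \psi(p)\,\con{+\oplus}\,\psi(q)
\end{align*}
where ``every other monomial'' covers $cY^2$ together with all monomials of total degree at least~$\anum{3}$. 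Since the $\phi_i$ are group homomorphisms, the omitted (path) cases of this recursion are automatic, exactly as in the proof of \cref{CP2}.

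Next I would verify that the three generators of the ideal are killed: $\psi(X^3)=\con{0\oplus}$ holds by definition, $\psi(Y^2)=\beta_1\,\func{$\smile_h$}\,\beta_1=\con{0\oplus}$ by~\eqref{cupIds1}, and $\psi(X^2+XY)=\con{base}\,\anum{2}\,(\phi_2(\anum{1}))\,\con{+\oplus}\,\con{base}\,\anum{2}\,(\phi_2(\anum{1}))=\con{base}\,\anum{2}\,(\phi_2(\anum{2}))=\con{0\oplus}$, because $\anum{2}=\anum{0}$ in $\ZMod$. Hence $\psi$ descends to $\bar\psi:\ZMod[X,Y]/(X^3,Y^2,X^2+XY)\to H^*(\Klein,\ZMod)$. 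That $\bar\psi$ preserves $\con{0\oplus}$, $\con{+\oplus}$ and the unit is immediate by construction, as in \cref{CP2}. For multiplicativity, the special eliminator for propositions reduces us to checking the claim on monomials; every product of total degree $\geq\anum{3}$ is sent to $\con{0\oplus}$ on both sides (the target groups vanish there and the quotient ring is concentrated in degrees $\leq\anum{2}$), so only the products of two degree-one generators remain, and these unfold directly into~\eqref{cupIds1}: for instance $\psi(c_1X\cdot c_2X)=\con{base}\,\anum{2}\,(\phi_2(c_1c_2))=c_1c_2\cdot\con{base}\,\anum{2}\,(\alpha_1\,\func{$\smile_h$}\,\alpha_1)=\psi(c_1X)\,\func{$\smile$}\,\psi(c_2X)$, and likewise $X\cdot Y$ and $Y\cdot X$ via $\alpha_1\,\func{$\smile_h$}\,\beta_1\equiv\beta_1\,\func{$\smile_h$}\,\alpha_1\equiv\gamma_1$, and $Y\cdot Y$ via $\beta_1^2\equiv 0$.

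For the inverse I would set, on homogeneous elements, $\psi^{-1}(\con{0\oplus})=0$, $\psi^{-1}(\con{base}\,\anum{0}\,c)=\phi_0^{-1}(c)$, $\psi^{-1}(\con{base}\,\anum{1}\,c)=aX+bY$ with $(a,b)=\phi_1^{-1}(c)$, $\psi^{-1}(\con{base}\,\anum{2}\,c)=\phi_2^{-1}(c)X^2$, $\psi^{-1}(\con{base}\,(\anum{3}+n)\,c)=0$, and $\psi^{-1}(x\,\con{+\oplus}\,y)=\psi^{-1}(x)+\psi^{-1}(y)$. That $\bar\psi$ and $\psi^{-1}$ cancel follows by \func{⊕HIT} induction, using $X^2\equiv XY$ and $Y^2\equiv 0$ in the quotient to normalise degree-two monomials; alternatively, $\bar\psi$ is graded and is, degreewise, a $\ZMod$-linear map between groups of matching shape ($\ZMod$, $\ZMod\times\ZMod$, $\ZMod$, and $0$ in higher degrees) sending a basis to a generating set, hence an isomorphism. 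The only genuine mathematical content here — and thus the real obstacle — is the cup-product table~\eqref{cupIds1}, which is assumed and whose proof is deferred to a separate paper. Modulo that, the single point requiring real care, compared with \cref{prop:cohom-S1,CP2,S2wedgeS1}, is that $H^1(\Klein,\ZMod)$ is a product rather than a single copy of the coefficient ring, so $\psi$ and $\psi^{-1}$ must be set up to respect its splitting into $\alpha_1$ and $\beta_1$; accordingly, the $2$-torsion relation $2Y$ of the integral presentation is replaced here by the relation $X^2+XY$ coming from $\alpha_1^2\equiv\alpha_1\,\func{$\smile_h$}\,\beta_1$. Everything else is the same routine bookkeeping as in \cref{CP2}.
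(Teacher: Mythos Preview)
Your proof is correct and follows the same template as the paper's. The only real difference is one of presentation: the paper defines $\psi$ just on the generators $1,X,Y$ and extends multiplicatively (implicitly using the universal property of the polynomial ring), so that $\psi$ is a ring homomorphism by construction and vanishing on the three ideal generators immediately justifies the descent to the quotient. You instead prescribe $\psi$ explicitly on every monomial and check multiplicativity afterwards.

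Two small wrinkles in your write-up are worth noting. First, since in your definition $\psi(cY^2)=\con{0\oplus}$ \emph{by fiat}, the identity $\beta_1^2\,\func{$\equiv$}\,0$ from~\eqref{cupIds1} plays no role in showing that $\psi$ kills $Y^2$; it is only needed later, for multiplicativity at $Y\cdot Y$. Second, descending a mere \emph{group} homomorphism through a quotient by an ideal requires it to vanish on the whole ideal, not just on the three listed generators. With your explicit definition this does hold (every monomial lying in the ideal is either of total degree $\geq 3$, equal to $cY^2$, or paired up inside some $c(X^2+XY)$, and all of these are sent to $\con{0\oplus}$), but you do not spell this out. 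The paper's ordering --- first make $\psi$ a ring homomorphism, then observe it kills the generators, then quotient --- sidesteps this bookkeeping entirely.
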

\begin{proof}
  In order to define $\psi : \ZMod[X,Y] \to H^*(\Klein,\ZMod)$, let us, for the sake of conciseness, be somewhat more informal than before. It suffices to define it on generators, i.e. $1$, $X$ and $Y$:
  \begin{align*}
    \psi(1) &= \con{base}\,\anum{0}\,\eta_1 \\
    \psi(X) &= \con{base}\,\anum{1}\,\alpha_1 \\
    \psi(Y) &= \con{base}\,\anum{1}\,\beta_1
  \end{align*}
  This induces the rest of the definition as $\psi$ needs to be a group homomorphism.
  Moreover, by the previous equations $\psi$ is automatically a ring homomorphism.
  We observe that $\psi$ vanishes
  on $(X^3,Y^2,X^2+XY)$:
  \begin{align*}
    \psi(X^3) &= \con{base}\,\anum{3}\,(\alpha^3_1) = \con{base}\,\anum{3}\,\func{$0_h$} = \con{0\oplus} \\
    \psi(Y^2) &= \con{base}\,\anum{3}\,(\beta^2_1) = \con{base}\,\anum{2}\,\func{$0_h$} = \con{0\oplus} \\
    \psi(X^2 + XY) &= \psi(X^2) \,\con{+\oplus}\, \psi(XY) = \con{base}\,\anum{2}\,(\alpha_1^2\, \func{$+_h$} \,(\alpha_1 \,\func{$\smile_h$}\, \beta_1)) \\
    &= \con{base}\,\anum{2}\,(\alpha_1^2 \,\func{$+_h$}\, \alpha_1^2) = \con{base}\,\anum{2}\, \func{$0_h$} \\
    &= \con{0\oplus}
  \end{align*}
  The first of these identities follows from $H^3(\Klein,\ZMod)$ being trivial, while the other two follow from the relations in \eqref{cupIds1} and the fact that the coefficients are $\ZMod$.

Hence, we may view $\psi$ as a homomorphism $$\ZMod[X,Y]/(X^3,Y^2,X^2+XY) \to H^*(\Klein,\ZMod)$$ The fact that it is an isomorphism is a straightforward consequence of the identities described in \eqref{cupIds1}.
\end{proof}

\if{
\begin{proof}
  We define the map $\psi : \ZMod[X,Y] \to H^*(\Klein,\ZMod)$ on homogeneous elements by
  \begin{align*}
    \psi(0) &= \con{0\oplus} \\
    \psi(X^nY^m) &= \con{base}\,(n+m)\,(F(n) \smile G(m)) \\
    \psi(p + q)  &= \psi(p) \,\con{+\oplus} \,\psi(q)
  \end{align*}
  where
  $F,G : (n : \mathbb{N}) \to H^n(\Klein)$ are given by \newline
  \begin{minipage}[t]{4cm}
      \vspace{-.3cm}
    \begin{align*}
    F(0) &= \eta_1 \\
    F(1) &= \alpha_1 \\
    F(2) &= \gamma_1 \\
    F(3 + n) &= 0
  \end{align*}
\end{minipage}
  \begin{minipage}[t]{4cm}
    \vspace{-.3cm}
  \begin{align*}
    G(0) &= \eta_1 \\
    G(1) &= \beta_1 \\
    G(2 + n) &= 0
  \end{align*}
  \end{minipage}
  \vspace{.2cm}
\newline
\noindent Note that
  $$\gamma_1 + \alpha_1 \smile \beta_1 = 2 \gamma_1 = 0$$ Hence, the
map vanishes on the elements of $(X^3,Y^2,X^2+XY)$. Let us spell out
why the map is a ring homomorphism. First, note that $F$ and $G$ sends
$+$ to $\smile$; this is a consequence of the identities
\begin{align*}
  \alpha_1 \smile \alpha_1 &= \gamma_1 \\
  \beta_1 \smile \beta_1 &= 0
\end{align*}
In order to show that $\psi$ is a ring homomorphism, the crucial identity that we need to show is
\[
\psi(X^{n+s}Y^{m+t}) = \psi(X^nY^m) \smile \psi(X^sY^t)
\]
This boils down to proving that
\[
F(n+s) \smile G(m+t) = (F(n) \smile G(m)) \smile (F(s) \smile G(t))
\]

We are careful to note that the above equality only holds up to an
identification of $(n+s) + (m+t)$ with $(n + m) + (s + t)$. This makes
the formalization slightly more bureaucratic, but let us stay informal
here. We have
\begin{align*}
  F(n+s) \smile G(m+t) = (F(n) \smile F(s)) \smile (G(m) \smile G(t))
\end{align*}
Rearranging the right-hand side using associativity and commutativity
for $\ZMod$-cohomology, we are done. We thus get that $\psi$ may be
seen as a homomorphism $$\ZMod[X,Y]/(X^3,Y^2,X^2+XY) \to H^*(\Klein)$$ The
inverse is induced by the following identifications.
\begin{align*}
  \eta_1 &\mapsto 1\\
  \alpha_1 &\mapsto X \\
  \gamma_2 &\mapsto X^2
\end{align*}
Proving that these maps cancel is an easy lemma.
\end{proof}
}\fi

\begin{proposition}\label{theorem:RP2}
  We have the following isomorphism:
  \[
    H^*(\RPW,\ZMod)\cong
    \ZMod[X,Y]/(X^3,XY,Y^2)
  \]
\end{proposition}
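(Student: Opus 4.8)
The plan is to follow the proofs of \cref{CP2} and \cref{theorem:Klein} essentially line for line, with the generators read off from \cref{table:k2} and the relations dictated by the cup product identities \eqref{cupIds2}. Let $\phi_0 : \ZMod \cong H^0(\RPW,\ZMod)$, $\phi_1 : \ZMod \times \ZMod \cong H^1(\RPW,\ZMod)$ and $\phi_2 : \ZMod \cong H^2(\RPW,\ZMod)$ be the isomorphisms, with $\phi_1$ sending the two standard generators to $\alpha_2$ and $\beta_2$ and $\phi_2$ sending the generator to $\gamma_2$. I would define $\psi : \ZMod[X,Y] \to H^*(\RPW,\ZMod)$ on generators by $\psi(1) = \con{base}\,\anum{0}\,\eta_2$, $\psi(X) = \con{base}\,\anum{1}\,\alpha_2$ and $\psi(Y) = \con{base}\,\anum{1}\,\beta_2$, extending to all of $\ZMod[X,Y]$ by the requirement that $\psi$ be a group homomorphism. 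Since $\alpha_2 \,\func{$\smile_h$}\, \beta_2 \equiv \beta_2^2 \equiv \func{$0_h$}$ and $\alpha_2^2 \equiv \gamma_2$ by \eqref{cupIds2}, this already makes $\psi$ a ring homomorphism, exactly as in \cref{theorem:Klein}.

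Next I would check that $\psi$ vanishes on the ideal generators $X^3$, $XY$, $Y^2$: we get $\psi(XY) = \con{base}\,\anum{2}\,(\alpha_2 \,\func{$\smile_h$}\, \beta_2) = \con{base}\,\anum{2}\,\func{$0_h$} = \con{0\oplus}$ and $\psi(Y^2) = \con{base}\,\anum{2}\,(\beta_2^2) = \con{0\oplus}$ directly from \eqref{cupIds2}, while $\psi(X^3) = \con{base}\,\anum{3}\,(\alpha_2^3) = \con{0\oplus}$ because $\alpha_2^3 = \gamma_2 \,\func{$\smile_h$}\, \alpha_2$ lies in the trivial group $H^3(\RPW,\ZMod)$. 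Hence $\psi$ descends to a ring homomorphism $\ZMod[X,Y]/(X^3,XY,Y^2) \to H^*(\RPW,\ZMod)$.

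For the inverse I would proceed as in \cref{prop:cohom-S1} and \cref{CP2}, defining $\psi^{-1}$ on homogeneous elements by $\psi^{-1}(\con{0\oplus}) = 0$, $\psi^{-1}(\con{base}\,\anum{0}\,c) = (\phi_0^{-1}(c))X^0$, $\psi^{-1}(\con{base}\,\anum{1}\,c) = aX + bY$ where $(a,b) = \phi_1^{-1}(c)$, $\psi^{-1}(\con{base}\,\anum{2}\,c) = (\phi_2^{-1}(c))X^2$, $\psi^{-1}(\con{base}\,(\anum{3} + n)\,c) = 0$, and $\psi^{-1}(x \,\con{+\oplus}\, y) = \psi^{-1}(x) + \psi^{-1}(y)$. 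That $\psi$ and $\psi^{-1}$ cancel then follows by \func{⊕HIT} induction together with the observation that $\ZMod[X,Y]/(X^3,XY,Y^2)$ is spanned as a $\ZMod$-module by the classes of $1$, $X$, $Y$, $X^2$, so that each residue class is a $\ZMod$-linear combination of those four and, symmetrically, each element of $H^*(\RPW,\ZMod)$ is a sum of \con{base} elements in degrees $0$, $1$ and $2$.

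The steps I expect to be delicate are precisely the two that already arise for the Klein bottle. First, the argument rests on the cup product identities \eqref{cupIds2}, whose verification the paper defers to separate work but which here may be assumed. Second, $\psi$ respects multiplication only up to the path in $\N$ identifying $(n+s)+(m+t)$ with $(n+m)+(s+t)$, so the formalization has to thread dependent paths through the degree arithmetic; as in \cref{theorem:Klein} this is bureaucratic rather than conceptually hard. Everything else --- well-definedness of $\psi^{-1}$ on the group $H^1 \cong \ZMod \times \ZMod$ and the cancellation computations --- is routine \func{⊕HIT} induction.
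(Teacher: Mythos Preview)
Your proposal is correct and matches the paper's approach exactly: the paper's proof is literally one sentence, ``Similar to the proof of \cref{theorem:Klein}, using the identities described in \eqref{cupIds2},'' and what you have written is precisely the unfolding of that sentence. Your handling of the three ideal generators, the degree-$3$ triviality for $X^3$, and the inverse in degree~$1$ via $\phi_1^{-1}$ are all what the Klein-bottle template dictates.
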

\begin{proof}
  Similar to the proof of \cref{theorem:Klein}, using the identities described in \eqref{cupIds2}.
\end{proof}
And thus, since the two spaces in question have different cohomology
rings, we arrive at the main result of this section.
\begin{corollary}
  $\Klein \not\simeq \RPW$
\end{corollary}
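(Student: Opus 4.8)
The plan is to argue by contradiction, using that the cohomology ring with $\ZMod$-coefficients is a homotopy invariant. Suppose there were an equivalence of types $e : \Klein \simeq \RPW$. By univalence $e$ gives a path $\Klein \equiv \RPW$, and applying the assignment $X \mapsto H^*(X,\ZMod)$ to it — which sends paths of spaces to paths of rings, since $H^n(-,\ZMod)$ is functorial and $\smile_h$ is natural — yields a ring isomorphism $H^*(\Klein,\ZMod) \cong H^*(\RPW,\ZMod)$. Composing with the isomorphisms of \cref{theorem:Klein,theorem:RP2}, we would obtain a ring isomorphism
\[
  \ZMod[X,Y]/(X^3,Y^2,X^2+XY) \;\cong\; \ZMod[X,Y]/(X^3,XY,Y^2).
\]
So it suffices to prove that these two finite $\ZMod$-algebras are \emph{not} isomorphic as rings.

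To do so I would exhibit a ring-theoretic invariant on which they differ. Both rings are $4$-dimensional over $\ZMod$ (a basis is $1,X,Y,X^2$), hence have $16$ elements, and in both the nilpotent elements are exactly those in the augmentation ideal $\mathfrak{m} = (X,Y)$, an $8$-element ideal. A ring isomorphism preserves nilpotents, hence preserves $\mathfrak{m}$, and therefore also the annihilator $\{\, z \mid z\,w = 0 \text{ for all } w \in \mathfrak{m} \,\}$. A direct finite computation — using $X^2 = XY$ in the first ring and $XY = 0$ in the second — shows this annihilator is $\{0,X^2\}$ in $\ZMod[X,Y]/(X^3,Y^2,X^2+XY)$, but $\{0,Y,X^2,X^2+Y\}$ in $\ZMod[X,Y]/(X^3,XY,Y^2)$. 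These sets have different cardinalities, so the rings cannot be isomorphic, which is the required contradiction.

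If one instead keeps track of the grading (a homotopy equivalence in fact induces a \emph{graded} ring isomorphism), the same obstruction can be phrased more transparently: the cup product pairing $\smile_h : H^1(X,\ZMod) \times H^1(X,\ZMod) \to H^2(X,\ZMod)$ is non-degenerate for $X = \Klein$ — by the identities \eqref{cupIds1} its matrix in the basis $(\alpha_1,\beta_1)$ has rows $(1,1)$ and $(1,0)$, which is invertible over $\ZMod$ — whereas for $X = \RPW$ the identities \eqref{cupIds2} show that $\beta_2 \neq 0$ pairs trivially with all of $H^1(\RPW,\ZMod)$. A degree-preserving ring isomorphism would have to send $\beta_2$ to a nonzero class in $H^1(\Klein,\ZMod)$ with the same property, and there is none.

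The main obstacle is not mathematical depth but infrastructure: one needs functoriality of the cohomology ring (that a homotopy equivalence $X \simeq Y$ induces a ring isomorphism $H^*(X,\ZMod) \cong H^*(Y,\ZMod)$), which rests on naturality of $\smile_h$, available in a form convenient enough to chain with \cref{theorem:Klein,theorem:RP2}, and then one has to carry out the small but somewhat fiddly computation of the chosen invariant directly on the quotient-ring HITs in \CubicalAgda. Everything else is routine: invariance under isomorphism is immediate, and all the arithmetic takes place in a $16$-element ring.
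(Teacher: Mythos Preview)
Your approach is essentially the same as the paper's: deduce $\Klein \not\simeq \RPW$ from the fact that the $\ZMod$-cohomology rings computed in \cref{theorem:Klein,theorem:RP2} are not isomorphic. The paper's own proof is literally that one sentence and leaves the non-isomorphism of the two quotient rings implicit; you go further and supply a correct invariant (the size of the annihilator of the nilradical, or equivalently the (non-)degeneracy of the degree-$1$ cup pairing) that distinguishes $\ZMod[X,Y]/(X^3,Y^2,X^2+XY)$ from $\ZMod[X,Y]/(X^3,XY,Y^2)$---detail that would indeed be needed for a complete formalization.
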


\section{Conclusions and future work}
\label{sec:conclusions}

In this paper, we have developed
cohomology rings synthetically and made multiple computations of such rings.
To the best of our knowledge, no other systems has a formalization of
cohomology rings for some cohomology theory.
We have
relied heavily on features of HoTT/UF, including univalence and HITs,
to do this in a way that is fully constructive without sacrificing
generality. The formalization also relies on the cubical SIP of
\citet{ACMZ21} to conveniently transport proofs between equivalent
structured types when developing the theory about direct sums in order
to get the best out of both representations. The fact that this was
done in \CubicalAgda where the univalence axiom, and hence the SIP,
has computational content means that it can be seen as a form of
\emph{data refinement} where we change data representation to
facilitate easier proofs while not sacrificing computation.

This form of data refinement is similar to the one in \CoqEAL
\citep{CohenDenesMortberg13}
and in the \systemname{Isabelle/HOL} tool \systemname{Autoref}
\citep{Lammich13}. One difference in our work is that we not only
refine programs, but also proof terms. Furthermore, both \func{⊕Fun}
and \func{⊕HIT} are \emph{proof-oriented} types in that they are both
well-suited for doing different kinds of proofs, while in \CoqEAL the
refinement was done from proof-oriented to computation-oriented
types. Another interesting difference is that HITs can be both
proof-oriented and computation-oriented at the same time. For example,
it is not necessary to refine \func{ListPoly} to ordinary lists as
concrete computations will only involve the point
constructors. Exploring these observations further is planned for
future work.

In \cref{table:summary}, we have summarized the computations of
cohomology rings which we have formalized. The additions to the
library required for these computation amount to about $8800$ lines of
code. However, this number should be taken with a grain of salt, as
this project was not a standalone development. In addition to these
$8800$ lines of code, various already existing parts of the library had to be
extended and rewritten, so the actual number of lines written is
bigger.
\begin{table}[h!]
  \centering
  \begin{tabular}{|c|c|c|}
    \hline
    Space & Coefficients & Cohomology ring \\
    \hline \hline
    $\func{$\mathbb{S}$}^n$ & $\Z$ & $\Z[X]/(X^2)$ \\
    \hline
    \func{$\mathbb{C}P^2$} & $\Z$ & $\Z[X]/(X^3)$ \\
    \hline
    \func{$\mathbb{S}^2 \,\vee \,\mathbb{S}^4$} & $\Z$ & $\Z[X,Y]/(X^2,XY,Y^2)$ \\
    \hline
    \func{$K^2$} & \begin{tabular}{c} $\Z$ \\ $\ZMod$ \end{tabular} & \begin{tabular}{c} $\Z[X,Y]/(X^2,XY,2Y,Y^2)$ \\ $\ZMod[X,Y]/(X^3,Y^2, X^2 + XY)$ \end{tabular} \\
    \hline
    \func{$\mathbb{RP}^2 \,\vee \, \mathbb{S}^1$} & \begin{tabular}{c} $\Z$ \\ $\ZMod$ \end{tabular} & \begin{tabular}{c} $\Z[X,Y]/(X^2,XY,2Y,Y^2)$ \\ $\ZMod[X,Y]/(X^3,Y^2,XY)$ \end{tabular}\\
    \hline
  \end{tabular}
  \caption{Summary of cohomology rings}
  \label{table:summary}
\end{table}

At the moment, our computations of cohomology rings are somewhat
ad-hoc, as each ring isomorphism is given by directly defining a
homomorphism with an explicitly constructed inverse. It seems plausible that
some of the complexity could be handled more conveniently by instead
first defining $f : R[X_1,\dots,X_n] \to H^*(X,R)$ and then
establishing that it is surjective. This would
mean that $H^*(X,R) \cong R[X_1,\dots,X_n] /
\mathsf{ker}(f)$ and the problem would be reduced to computing the generators of $\mathsf{ker}(f)$. Classically, one would typically rely on $R$ being
Noetherian, which carries over to $R[X_1,\dots,X_n]$ by Hilbert's
basis theorem. This then implies that the kernel is finitely
generated. This does, however, not work very well constructively and we
would not be able to extract a list of generators of the
ideal from the proof. Instead, we would have to rely on the theory of coherent rings which has previously been formalized in \Coq by
\citet{coherent}, but in order to use it in this context we would need
a proof that $R[X_1,\dots,X_n]$ is coherent, which boils down to
Gröbner basis computations \citep{BuchbergerConstructive}. Such an
approach, while interesting, would be a very major undertaking.

A less challenging direction of future work would be to compute
cohomology rings by hand for some more spaces. For example, the
cohomology ring of the torus should be very feasible to compute using
our machinery. A much more ambitious project would then be to
generalize this to a Künneth formula for computing cohomology of
general products of spaces. Another class of spaces to consider is
that of higher-dimensional real/complex projective spaces. \citet{RPn} gave a HoTT
formalization of $\mathbb{R}P^n$ and $\mathbb{R}P^{\infty}$, so it
would be interesting to formalize also these spaces and compute their
cohomology groups and rings.

In \cite{BLM} \emph{reduced} cohomology groups were also
formalized. These too could be lifted to construct reduced cohomology
rings $\widetilde{H}^*(X,R)$. Such a formalization would probably
benefit from the SIP as we could transport operations and results from
$H^*(X,R)$ in order to obtain a compact construction of
$\widetilde{H}^*(X,R)$. This could then be used to give general
theorems for computing the reduced cohomology ring of wedge sums:
$$\widetilde{H}^*(X \vee Y,R) \cong \widetilde{H}^*(X,R) \times \widetilde{H}^*(Y,R)$$

\bibliographystyle{ACM-Reference-Format}
\bibliography{refs}

\end{document}